\documentclass[reqno,a4paper, 11pt]{amsart}

\usepackage[a4paper=true,pdfpagelabels]{hyperref}
\usepackage{graphicx}

\usepackage[ansinew]{inputenc}
\usepackage{amsfonts,epsfig}
\usepackage{latexsym}
\usepackage{mathabx}
\usepackage{amsmath}
\usepackage{amssymb}

\usepackage{color}

\newtheorem{theorem}{Theorem}
\newtheorem{lemma}[theorem]{Lemma}

\newtheorem{proposition}[theorem]{Proposition}

\newtheorem{lettertheorem}{Theorem}
\newtheorem{letterlemma}[lettertheorem]{Lemma}

\theoremstyle{definition}

\theoremstyle{remark}

\numberwithin{equation}{section}

\setlength\arraycolsep{2pt}

\newcommand{\abs}[1]{\lvert#1\rvert}
\newcommand{\nm}[1]{\lVert#1\rVert}

\newcommand{\B}{\mathcal{B}}
\newcommand{\D}{\mathbb{D}}
\newcommand{\DD}{\widehat{\mathcal{D}}}
\newcommand{\Dd}{\widecheck{\mathcal{D}}}

\newcommand{\DDD}{\mathcal{D}}

\newcommand{\N}{\mathbb{N}}

\newcommand{\RR}{\mathbb{R}}

\newcommand{\C}{\mathbb{C}}

\newcommand{\e}{\varepsilon}

\renewcommand{\phi}{\varphi}
\newcommand{\A}{\mathcal{A}}

\newcommand{\T}{\mathbb{T}}

\def\BMOA{\mathord{\rm BMOA}}

\def\HL{\mathord{\rm HL}}

           \def\e{\varepsilon}
     \def\om{\omega}      
       \def\t{\theta}       
                  \def\z{\zeta}

\DeclareMathOperator{\supp}{supp}
\renewcommand{\H}{\mathcal{H}}

\addtolength{\hoffset}{-1.5cm}
\addtolength{\textwidth}{3cm}
\addtolength{\voffset}{-1cm}
\addtolength{\textheight}{2cm}

\begin{document}
\title[Integral operators mapping into the space of bounded analytic functions]{Integral operators induced by symbols with non-negative Maclaurin coefficients mapping into $H^\infty$}

\keywords{Bloch space, Bounded mean oscillation, Dirichlet-type space, duality, Hardy space, Hardy-Littlewood space, integral operator.}


\author{Jos\'e \'Angel Pel\'aez}
\address{Departamento de An\'alisis Matem\'atico, Universidad de M\'alaga, Campus de
Teatinos, 29071 M\'alaga, Spain} \email{japelaez@uma.es}

\author{Jouni R\"atty\"a}
\address{University of Eastern Finland, P.O.Box 111, 80101 Joensuu, Finland}
\email{jouni.rattya@uef.fi}

\author{Fanglei Wu}
\address{Department of Mathematics, Shantou University, Shantou, Guangdong 515063, China}
\address{University of Eastern Finland, P.O.Box 111, 80101 Joensuu, Finland}
\email{fangleiwu1992@gmail.com}

\thanks{The research of the first author was supported in part by Ministerio de Econom\'{\i}a y Competitividad, Spain, projects
PGC2018-096166-B-100; La Junta de Andaluc{\'i}a,
projects FQM210  and UMA18-FEDERJA-002}

\begin{abstract}
For analytic functions $g$ on the unit disc with non-negative Maclaurin coefficients, we describe the boundedness and compactness 
of the integral operator $T_g(f)(z)=\int_0^zf(\z)g'(\z)\,d\z$  from a space $X$ of analytic functions in the unit disc to $H^\infty$, in terms 
of neat and useful conditions  on the Maclaurin coefficients of $g$. The choices of $X$ that will be considered contain the Hardy and the Hardy-Littlewood spaces, the Dirichlet-type spaces $D^p_{p-1}$, as well as the classical Bloch and $\BMOA$ spaces. 
\end{abstract}

\maketitle

\section{Introduction and main results}

Let $\H(\D)$ denote the space of analytic functions in the unit disc $\D=\{z\in\C:|z|<1\}$. Each $g\in\H(\D)$ induces the integral operator defined by
	$$
	T_g(f)(z)=\int_0^zf(\zeta)g'(\zeta)\,d\zeta,\quad z\in\D.
	$$
The question of when this operator is either bounded or compact has been extensively studied in a large variety of spaces of analytic functions since the appearance of the seminal works, related to the Hardy and Bergman spaces, due to Aleman, Cima, Pommerenke and Siskakis~\cite{AC,AS,Pom}. Getting neat conditions on the symbol $g$ which describe the bounded and compact operators $T_g$ acting from a Banach space $X\subset\H(\D)$ to the Hardy space $H^\infty$ is known to be a tough problem~\cite{AJS,CPPR,SSV2017}. However, recently an abstract approach to the study of this question was given in \cite{CPPR}. One of the basic results there is the reproducing kernel dual testing condition provided in \cite[Theorem~2.2]{CPPR}.
It states that, if $X^\star\simeq Y$ via the $H(\beta)$-pairing
	$$
	\langle f,g\rangle_{H(\beta)}=\lim_{r\to1^-}\sum_{n=0}^\infty\widehat{f}(n)\overline{\widehat{g}(n)}\beta_nr^n,
	\quad f(z)=\sum_{n=0}^\infty\widehat{f}(n)z^n,\quad g(z)=\sum_{n=0}^\infty\widehat{g}(n)z^n,
	$$
where $\lim_{n\to\infty}\sqrt[n]{\beta_n}=1$, then $T_g:X\to H^\infty$ is bounded if and only if $\sup_{z\in\D}\left\|G^{H(\beta)}_{g,z}\right\|_Y<\infty$, where
		$$
		\overline{G^{H(\beta)}_{g,z}(w)}
		=\int_0^zg'(\zeta)\overline{K_\zeta^{H(\beta)}(w)}\,d\zeta
		=\overline{T_g^*(K_z^{H(\beta)})(w)},\quad z\in\D,\quad w\in\overline{\D},
		$$
and $K_\zeta^{H(\beta)}$ are the reproducing kernels of the Hilbert space $H(\beta)$.
 
Theoretically, the above relatively simple result offers a characterization of the boundedness in the case of most of the natural spaces one can think of. However, if one tries to apply this in praxis one observes that it looks like a reformulation rather than a solution of the problem. This is due to the fact that treating the function $G^{H(\beta)}_{g,z}$ in the dual space of $X$ is often laborious if not even frustrating. Because of these reasons, in this study we restrict ourselves to the case in which the symbol $g$ has non-negative Maclaurin coefficients, and search for neat and useful conditions in terms of the Maclaurin coefficients of $g$ that can be used to test if $T_g$ is either bounded or compact from $X$ to $H^\infty$. The starting point is the characterization \cite[Theorem~2.2]{CPPR} given above, and the choices for $X$ that will be considered in the sequel contain the Hardy and the Hardy-Littlewood spaces, and certain Dirichlet-type spaces, as well as the classical Bloch space and $\BMOA$. Next, the main findings of this study along with necessary definitions are stated.

For $0<p\le\infty$, the Hardy space $H^p$ consists of $f\in\H(\D)$ for which
    \begin{equation*}\label{normi}
    \|f\|_{H^p}=\sup_{0<r<1}M_p(r,f)<\infty,
    \end{equation*}
where
    $$
    M_p(r,f)=\left (\frac{1}{2\pi}\int_0^{2\pi}
    |f(re^{i\theta})|^p\,d\theta\right )^{\frac{1}{p}},\quad 0<p<\infty,
    $$
and
    $$
    M_\infty(r,f)=\max_{0\le\theta\le2\pi}|f(re^{i\theta})|.
    $$
Further, $f\in\H(\D)$ belongs to the Dirichlet-type space $D^p_{p-1}$ if
	$$
	\|f\|^p_{D^p_{p-1}}=\int_\D|f'(z)|^p(1-|z|)^{p-1}\,dA(z)+|f(0)|^p<\infty,
	$$
where $dA(z)=\frac{dx\,dy}{\pi}$ is the normalized area measure on $\D$. The closely related Hardy-Littlewood space $\HL_p$ contains those $f\in\H(\D)$ whose Maclaurin coefficients $\{\widehat{f}(n)\}_{n=0}^\infty$ satisfy
	$$
	\nm{f}^p_{\HL_p}=\sum_{n=0}^{\infty}\abs{\widehat{f}(n)}^p (n+1)^{p-2}<\infty.
	$$
These spaces satisfy the well-known inclusions
	\begin{equation}\label{Hardy1}
	D^p_{p-1}\subset H^p\subset\HL_p,\quad 0<p\le 2,
	\end{equation}
and
	\begin{equation}\label{Hardy2}
	\HL_p\subset H^p\subset D^p_{p-1} ,\quad 2\le p<\infty,
	\end{equation}
by \cite{D,Flett,LP}. Each of these inclusions is strict unless $p=2$, in which case all the spaces are the same by direct calculations or straightforward applications of Parseval's formula and Green's theorem. 

Our first main result reveals that $T_g$ does not distinguish $H^p$, $\HL_p$ and $D^p_{p-1}$ when it acts boundedly or compactly from one of these spaces to $H^\infty$, provided $1<p<\infty$ and the symbol $g$ has non-negative Maclaurin coefficients. Here, as usual, the conjugate index of $1<p<\infty$ is the number $p'$ such that $\frac1p+\frac1{p'}=1$.

\begin{theorem}\label{Theorem:Hardy-p<2}
Let $1<p<\infty$ and $g\in H^\infty$ such that $\widehat{g}(n)\geq0$ for all $n\in\mathbb{N}\cup\{0\}$. Further, let $X_p\in\{H^p,D^p_{p-1}\HL_p\}$. Then $T_g:~X_p\rightarrow H^\infty$ is bounded (equivalently compact) if and only if
	\begin{equation}\label{pipeli}
	\sum_{k=0}^{\infty}(k+1)^{p'-2}\left(\sum_{n=0}^{\infty}\frac{(n+1)\widehat{g}(n+1)}{n+k+1}\right)^{p'}<\infty.
	\end{equation}
Moreover,
	\begin{equation}\label{eq:j11}
	\|T_g\|_{X_p\rightarrow H^\infty}^{p'}
	\asymp\sum_{k=0}^{\infty}(k+1)^{p'-2}\left(\sum_{n=0}^{\infty}\frac{(n+1)\widehat{g}(n+1)}{n+k+1}\right)^{p'}.
	\end{equation}
\end{theorem}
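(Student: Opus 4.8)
The plan is to run the dual-testing criterion \cite[Theorem~2.2]{CPPR} for all three choices of $X_p$ at once, using that for $1<p<\infty$ each of $\HL_p$, $H^p$ and $D^p_{p-1}$ is self-dual in its own scale under the unweighted Cauchy pairing: $(\HL_p)^\star\simeq\HL_{p'}$, $(H^p)^\star\simeq H^{p'}$ and $(D^p_{p-1})^\star\simeq D^{p'}_{p'-1}$ via the $H(\beta)$-pairing with $\beta_n\equiv1$. Thus in every case $H(\beta)=H^2$, the kernel is $K_z^{H(\beta)}(w)=(1-\overline z w)^{-1}$, and the test function of \cite{CPPR} is the single explicit object
\[
G_{g,z}(w)=\overline z\int_0^1\overline{g'(tz)}\,\frac{dt}{1-t\overline z w},\qquad
\widehat{G_{g,z}}(n)=\sum_{m=0}^{\infty}\frac{(m+1)\widehat g(m+1)}{m+n+1}\,\overline{z}^{\,m+n+1}.
\]
By \cite[Theorem~2.2]{CPPR} it then suffices to show, for each $Y\in\{\HL_{p'},H^{p'},D^{p'}_{p'-1}\}$, that $\sup_{z\in\D}\|G_{g,z}\|_Y$ is comparable to the right-hand side of \eqref{eq:j11}.

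The backbone is the solid space $Y=\HL_{p'}$. Here the norm depends only on the moduli of the coefficients, so the sign condition $\widehat g(n)\ge0$ gives $|\widehat{G_{g,z}}(n)|\le\widehat{G_{g,|z|}}(n)$ and hence $\|G_{g,z}\|_{\HL_{p'}}\le\|G_{g,|z|}\|_{\HL_{p'}}$; letting $z=r\to1^-$ and using that the coefficients increase to their limits, monotone convergence identifies $\sup_{z}\|G_{g,z}\|_{\HL_{p'}}^{p'}$ exactly with \eqref{pipeli}. This settles the theorem for $X_p=\HL_p$, together with the norm estimate \eqref{eq:j11}.

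To pass to $H^p$ and $D^p_{p-1}$ I would use the inclusions \eqref{Hardy1}--\eqref{Hardy2}, along which boundedness into $H^\infty$ automatically descends from a larger space to a smaller one. This reduces the theorem to two residual implications, both involving the endpoint $D^p_{p-1}$: for $1<p\le2$ one must show that boundedness on $D^p_{p-1}$ forces \eqref{pipeli}, while for $2\le p<\infty$ one must show that \eqref{pipeli} forces boundedness on $D^p_{p-1}$. The link is the Hardy--Littlewood phenomenon that on the cone of non-negative Maclaurin coefficients the inclusions \eqref{Hardy1}--\eqref{Hardy2} reverse, so that $\|F\|_{D^q_{q-1}}\asymp\|F\|_{H^q}\asymp\|F\|_{\HL_q}$ whenever $\widehat F(n)\ge0$. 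Applying this with $q=p'$ to the radial test functions $G_{g,r}$, which inherit non-negative coefficients from $g$, gives $\|G_{g,r}\|_{D^{p'}_{p'-1}}\asymp\|G_{g,r}\|_{\HL_{p'}}$ and hence, by the backbone computation, $\sup_r\|G_{g,r}\|_{D^{p'}_{p'-1}}^{p'}\asymp\eqref{pipeli}$. For the necessity half ($p\le2$) this is all that is needed, since only the radial directions $z=r$ are required to bound \eqref{pipeli} from above by the testing quantity.

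The main obstacle is the sufficiency half for $2\le p<\infty$: here $p'\le2$, the space $D^{p'}_{p'-1}$ carries the largest of the three norms, and neither solidity nor \eqref{Hardy1}--\eqref{Hardy2} controls $\|G_{g,z}\|_{D^{p'}_{p'-1}}$ from above for non-radial $z$. My plan for this is to exploit the integral representation above: Minkowski's inequality in $D^{p'}_{p'-1}$, the rotation invariance of its norm (so that $\|(1-t\overline z\,\cdot\,)^{-1}\|_{D^{p'}_{p'-1}}$ depends only on $t|z|$), and the pointwise bound $|g'(tz)|\le g'(t|z|)$ coming from $\widehat g\ge0$, together reduce the estimate of $\|G_{g,z}\|_{D^{p'}_{p'-1}}$ to the radial quantity already controlled in the previous paragraph; verifying that this reduction is lossless up to a constant is the delicate point. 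Finally, the equivalence with compactness follows by approximating $g$ by its Maclaurin polynomials $g_N$: each $T_{g_N}$ is compact, and the norm estimate \eqref{eq:j11} applied to $g-g_N$ shows that $\|T_g-T_{g_N}\|_{X_p\rightarrow H^\infty}^{p'}$ is comparable to the tail of \eqref{pipeli}, which tends to $0$ as soon as the full series converges, so that every bounded $T_g$ is a norm limit of compact operators.
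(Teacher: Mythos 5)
Your overall architecture is sound: the dual-testing criterion with the $H^2$-pairing, the solid $\HL_{p'}$ backbone, the reduction via \eqref{Hardy1}--\eqref{Hardy2} to two residual implications, and the compactness-by-polynomial-approximation at the end (which is in fact tidier than the paper's sequential Tjani-type arguments, though it presupposes the norm formula \eqref{eq:j11} and a short growth-estimate check that $T_{g_N}$ is compact for each monomial). The proof fails, however, exactly at the point you flag as delicate: the Minkowski reduction for the sufficiency half when $2\le p<\infty$ is not lossless, and no constant can repair it. With $q=p'\in(1,2]$ one has $\|(1-t\overline{z}w)^{-1}\|_{D^{q}_{q-1}}\asymp(1-t|z|)^{-1/q'}$, so Minkowski's inequality gives at best $\sup_{z\in\D}\|G_{g,z}\|_{D^{p'}_{p'-1}}\lesssim\int_0^1 g'(s)(1-s)^{-1/p}\,ds\asymp\sum_n\widehat{g}(n+1)(n+1)^{1/p}$, an $\ell^1$-type aggregation of dyadic blocks, whereas \eqref{pipeli} aggregates them in $\ell^{p'}$. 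Concretely, put $b_n=(n+1)\widehat{g}(n+1)=c_j$ for $2^{m_j}\le n<2^{m_j+1}$ with $m_j$ super-lacunary and $b_n=0$ otherwise; then both quantities localize to the blocks, the $p'$-th root of \eqref{pipeli} is $\asymp\bigl(\sum_j(c_j2^{m_j/p})^{p'}\bigr)^{1/p'}$, while your Minkowski majorant is $\asymp\sum_j c_j2^{m_j/p}$. Choosing $c_j=j^{-1}2^{-m_j/p}$ gives $g$ in the disc algebra with \eqref{pipeli} $\asymp\sum_j j^{-p'}<\infty$ but Minkowski majorant $\sum_j j^{-1}=\infty$; by the theorem itself this $T_g$ is bounded on $D^p_{p-1}$, so your reduction genuinely loses the result. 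The paper's substitute is precisely to preserve the $\ell^{p'}$ block structure: $\|F\|^{p'}_{D^{p'}_{p'-1}}\asymp\sum_j\|\Delta_j\ast F\|^{p'}_{H^{p'}}$ by \cite{MatPav}, combined with the smooth Ces\'aro-basis multiplier estimate of Proposition~\ref{pr:j1} (via Theorem~\ref{th:cesaro}), which bounds each block of $G^{H^2}_{g,z}$ by $2^{j/p}\sum_n(n+1)|\widehat{g}(n+1)|/(n+2^{j-1}+1)$ using \eqref{Deltanorm}; this is Theorem~\ref{th:dpsuf}, which moreover needs no sign hypothesis and covers all $1<p<\infty$ and all three spaces at once.

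A second, repairable, flaw: the coefficient-cone equivalence you invoke is false as stated. Non-negativity of the Maclaurin coefficients alone does not give $\|F\|_{D^q_{q-1}}\asymp\|F\|_{H^q}\asymp\|F\|_{\HL_q}$: the lacunary series $\sum_k k^{-1}z^{2^k}$ has non-negative coefficients and lies in $H^q$ for every $q<\infty$, yet fails to belong to $\HL_q$ for $q>2$ since $\sum_k k^{-q}2^{k(q-2)}=\infty$; this contradicts the direction $\|\cdot\|_{\HL_{p'}}\lesssim\|\cdot\|_{D^{p'}_{p'-1}}$ that your necessity argument for $1<p\le2$ requires. The correct statement (Lemma~\ref{Eq1}, due to Hardy--Littlewood and Pavlovi\'c) demands coefficients that are non-negative \emph{and decreasing to zero}. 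Fortunately your radial test functions satisfy the stronger hypothesis: $\widehat{G^{H^2}_{g,r}}(k)=\sum_n(n+1)\widehat{g}(n+1)r^{n+k+1}/(n+k+1)$ is non-negative and decreasing in $k$, so the necessity half goes through once the citation is corrected --- and this is exactly how the paper runs that part of the proof.
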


On the way to Theorem~\ref{Theorem:Hardy-p<2} we show in Theorem~\ref{th:dpsuf} below that for each $g\in\H(\D)$ we have
	\begin{equation}\label{poi}
	\|T_g\|_{X_p\rightarrow H^\infty}^{p'}
  \lesssim
  \sum_{k=0}^{\infty}(k+1)^{p'-2}\left(\sum_{n=0}^{\infty}\frac{(n+1)|\widehat{g}(n+1)|}{n+k+1}\right)^{p'}
	\end{equation}
for each $X_p\in\{H^p,D^p_{p-1}\HL_p\}$. This observation offers a relatively easy way to see if a given general symbol $g$ induces a bounded operator on the Hardy space $H^p$ with $1<p<\infty$.

The proof of Theorem~\ref{Theorem:Hardy-p<2} relies on \cite[Theorem~2.2]{CPPR}, and duality relations for $H^p$, $D^p_{p-1}$ and $\HL_p$ with $1<p<\infty$. The dual of $H^p$ is of course isomorphic to $H^{p'}$ via the $H^2$-pairing (the Cauchy-pairing), and certainly many experts working on the field now that $(D^p_{p-1})^\star\simeq D^{p'}_{p'-1}$ and $(\HL_p)^\star\simeq\HL_{p'}$ via the same pairing. Since we do not know exact references for the last-mentioned two dualities, we give proofs in Section~\ref{Sec:dualities} where also other less obvious duality relations are treated. Another tool that we will employ in the proof of Theorem~\ref{Theorem:Hardy-p<2} is of technical nature, and concerns smooth universal Ces\'aro basis of polynomials~\cite[Section 5.2]{Pavlovic}. The proof of Theorem~\ref{Theorem:Hardy-p<2} is presented in Section~\ref{Sec:pili}. 

If $0<p<1$, then $T_g:H^p\to H^\infty$ is bounded if and only if $g$ is a constant by~\cite[Theorem~2.5(vi)]{CPPR}. Further, by~\cite[Theorem~4.2(ii)]{CPPR}, $T_g:H^1\to H^\infty$ is compact if and only if $g$ is a constant. Therefore the same conclusions are valid for $T_g$ acting on $\HL_p$ by \eqref{Hardy1}. The following result clarifies the situation with regard to the Dirichlet-type spaces $D^p_{p-1}$. Here and from now on $T(X,H^\infty)$ (resp. $T_c(X,H^\infty)$) denotes the set of $g\in\H(\D)$ such that $T_g:X\to H^\infty$ is bounded (resp. compact).

\begin{theorem}\label{Theorem:p=1}
Let $g\in H^\infty$ and $X_p\in\{H^p,D^p_{p-1},\HL_p\}$. Then the following assertions hold:
\begin{itemize}
\item[\rm(i)] $T(X_1,H^\infty)$ contains all polynomials;
\item[\rm(ii)] $T(X_p,H^\infty)$ consists of constant functions only if $0<p<1$;
\item[\rm(iii)] $T_c(X_1,H^\infty)$ consists of constant functions only.
\end{itemize}
\end{theorem}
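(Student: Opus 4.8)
The plan is to treat the three spaces $H^p$, $\HL_p$ and $D^p_{p-1}$ separately, exploiting that for $H^p$ and $\HL_p$ the assertions are essentially already available. For (ii) the $H^p$ statement is exactly \cite[Theorem~2.5(vi)]{CPPR}, and for (iii) the $H^1$ statement is \cite[Theorem~4.2(ii)]{CPPR}; in both cases the $\HL_p$ statement follows from the inclusion $H^p\subset\HL_p$ in \eqref{Hardy1}, since boundedness (resp.\ compactness) of $T_g$ on the larger space $\HL_p$ restricts to boundedness (resp.\ compactness) of $T_g$ on $H^p$, forcing $g$ to be constant. Thus the whole content of the theorem reduces to three assertions about $D^p_{p-1}$: that every polynomial lies in $T(D^1_0,H^\infty)$, that $T(D^p_{p-1},H^\infty)$ consists of constants for $0<p<1$, and that $T_c(D^1_0,H^\infty)$ consists of constants. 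The reverse inclusions (a constant $g$ gives $T_g=0$) are trivial throughout.

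For (i) I would reduce all three spaces to the largest one, $\HL_1$, using $D^1_0\subset H^1\subset\HL_1$ from \eqref{Hardy1}: it suffices to show $T_g:\HL_1\to H^\infty$ is bounded for every polynomial $g$. Writing $g'(\zeta)=\sum_{m=0}^{d}c_m\zeta^m$ and expanding,
\[
T_g(f)(z)=\sum_{m=0}^{d}c_m\sum_{n=0}^\infty\widehat f(n)\frac{z^{n+m+1}}{n+m+1},
\]
so that $\|T_g f\|_{H^\infty}\le\sum_{m=0}^{d}|c_m|\sum_{n=0}^\infty\frac{|\widehat f(n)|}{n+m+1}\le\bigl(\sum_{m=0}^d|c_m|\bigr)\sum_{n=0}^\infty\frac{|\widehat f(n)|}{n+1}=\bigl(\sum_{m=0}^d|c_m|\bigr)\|f\|_{\HL_1}$, since $\|f\|_{\HL_1}=\sum_n|\widehat f(n)|(n+1)^{-1}$. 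This gives boundedness on $\HL_1$, hence on $H^1$ and $D^1_0$.

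For the $D^p_{p-1}$ parts of (ii) and (iii) I would use the test functions $f_a(z)=(1-\overline a z)^{-\gamma}$ with $\gamma>1/p$, for which the familiar asymptotics of $\int_\D(1-|z|)^{p-1}|1-\overline a z|^{-(\gamma+1)p}\,dA(z)$ give $\|f_a\|_{D^p_{p-1}}\asymp(1-|a|^2)^{1/p-\gamma}$. Since $(T_g f_a)'(a)=f_a(a)g'(a)=(1-|a|^2)^{-\gamma}g'(a)$ and $T_g f_a\in H^\infty$, the Schwarz--Pick estimate $|h'(a)|(1-|a|^2)\le\|h\|_{H^\infty}$ applied to $h=T_g f_a$ yields
\[
(1-|a|^2)^{-\gamma}|g'(a)|\le\frac{\|T_g\|\,\|f_a\|_{D^p_{p-1}}}{1-|a|^2}\asymp\|T_g\|\,(1-|a|^2)^{1/p-\gamma-1},
\]
that is, $|g'(a)|\lesssim\|T_g\|\,(1-|a|^2)^{1/p-1}$ for every $a\in\D$. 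When $0<p<1$ one has $1/p-1>0$, so $M_\infty(r,g')=\sup_{|a|=r}|g'(a)|\to0$ as $r\to1$; since $M_\infty(\,\cdot\,,g')$ is non-decreasing by the maximum modulus principle, it is identically zero, whence $g'\equiv0$ and $g$ is constant, proving (ii). For (iii), where $p=1$ and $1/p-1=0$, boundedness alone only gives $g'\in H^\infty$, so I would instead use compactness: normalising to $\widetilde f_a=(1-|a|^2)^{\gamma-1}f_a$ with $\gamma>1$ gives $\|\widetilde f_a\|_{D^1_0}\asymp1$ and $\widetilde f_a\to0$ locally uniformly as $|a|\to1$, so compactness of $T_g$ forces $\|T_g\widetilde f_a\|_{H^\infty}\to0$; the same Schwarz--Pick computation now gives $|g'(a)|\le\|T_g\widetilde f_a\|_{H^\infty}$, hence $M_\infty(r,g')\to0$ and again $g$ is constant by maximum modulus.

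The main obstacle is the $D^p_{p-1}$ analysis, and within it the point that the test-function estimates produce only the pointwise growth bound $|g'(a)|\lesssim(1-|a|^2)^{1/p-1}$, a condition satisfied in isolation by many non-constant analytic functions. The decisive step is to recognise that this bound forces $M_\infty(r,g')\to0$ as $r\to1$, and then to invoke the monotonicity of $M_\infty(r,g')$ in $r$ to upgrade boundary decay to global vanishing; the analogous little-oh phenomenon extracted from compactness is precisely what separates (iii) from the merely bounded case at $p=1$ (where polynomials survive by (i)). A secondary technical point is the verification of the norm asymptotics $\|f_a\|_{D^p_{p-1}}\asymp(1-|a|^2)^{1/p-\gamma}$ and of the local uniform decay of the normalised family, both of which are routine.
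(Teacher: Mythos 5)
Your proof is correct, and for the substantive parts it takes a genuinely different, more elementary route than the paper. The reductions coincide: like the paper, you push (i) up to $\HL_1$ and (ii)--(iii) down to $D^p_{p-1}$ via \eqref{Hardy1}, and dispose of the $H^p$ and $\HL_p$ cases of (ii)--(iii) with the same citations \cite[Theorems~2.5(vi) and 4.2(ii)]{CPPR}. After that the methods diverge. For (i), the paper proves $(\HL_1)^\star\simeq\HL_\infty$ (Lemma~\ref{le:dualHL1}) and invokes the kernel-testing criterion \cite[Theorem~2.2]{CPPR} to bound $\sup_{z\in\D}\|G^{H^2}_{g,z}\|_{\HL_\infty}$; your one-line coefficient estimate $\|T_gf\|_{H^\infty}\le\bigl(\sum_m|c_m|\bigr)\|f\|_{\HL_1}$ bypasses duality entirely. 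For (ii), the paper goes through $(D^p_{p-1})^\star\simeq\B^2$ via the $A^2_{\frac1p-1}$-pairing (Lemma~\ref{lemma:dual-Dirichlet-2}, whose proof requires the smooth Ces\'aro bases) and shows the dual norms of the kernels blow up like $(1-r)^{-(\frac1p-1)}$, whereas you test on $(1-\overline{a}z)^{-\gamma}$ and use the Schwarz--Pick/Cauchy bound $|h'(a)|(1-|a|^2)\le\|h\|_{H^\infty}$ with $h=T_gf_a$, exploiting $(T_gf)'=fg'$, to get $|g'(a)|\lesssim\|T_g\|(1-|a|^2)^{\frac1p-1}$ and then upgrade via monotonicity of $M_\infty(r,g')$. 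For (iii), the paper's argument is functional-analytic (compactness of $T_g^*$, relative compactness of $\{G^{A^2}_{g,z}:z\in\D\}$ in $\B^2$, a vanishing Carleson-type condition, and a lengthy coefficient computation producing a contradiction), while your normalized weak-null family $\widetilde f_a$ plus the standard sequential compactness criterion (the easy direction of \cite[Lemma~3.6]{Tjani}, which the paper itself uses elsewhere, and which applies since norm convergence in $H^\infty$ implies pointwise convergence) yields $|g'(a)|\le\|T_g\widetilde f_a\|_{H^\infty}\to0$ directly. Your routine steps do check out: $\|f_a\|_{D^p_{p-1}}\asymp(1-|a|^2)^{\frac1p-\gamma}$ holds for $\gamma>\frac1p$ and, say, $|a|\ge\frac12$ (for small $|a|$ the term $|f_a(0)|$ interferes, but only $|a|\to1^-$ matters), and $|\widetilde f_a(z)|\le(1-|a|^2)^{\gamma-1}(1-|z|)^{-\gamma}$ gives the locally uniform decay. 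As for what each approach buys: yours is short and self-contained for the qualitative Theorem~\ref{Theorem:p=1} and transparently isolates why $p=1$ separates boundedness from compactness (the exponent $\frac1p-1$ vanishes, and compactness supplies the missing little-oh); the paper's heavier dual/kernel machinery is developed anyway because it is indispensable for the quantitative norm formulas of Theorems~\ref{Theorem:Hardy-p<2} and~\ref{Theorem:p=1'}, which single-kernel test-function arguments like yours cannot deliver, since they test only against individual reproducing kernels rather than computing the full dual norm.
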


In the proof of Theorem~\ref{Theorem:p=1} we use identifications of the duals of $\HL_1$ and $D^p_{p-1}$ with $0<p\le1$. 
Since many dual spaces $X^\star$ can be described, via the $H^2$-pairing,  as  the space of coefficient multipliers from $X$ to the disc algebra  \cite[Proposition~1.3]{P}, a natural characterization of the dual of $\HL_1$ is easy to find by using the relation $(\ell^1)^\star\simeq\ell^\infty$. We do this in Section~\ref{Sec:dualities} when we prove Lemma~\ref{le:dualHL1} which states that $(\HL_1)^\star\simeq\HL_{\infty}$ via the $H^2$-pairing with equivalence of norms. The space $\HL_{\infty}$ consists of $f\in\H(\D)$ such that its Maclaurin coefficients $\{\widehat{f}(n)\}_{n=0}^{\infty}$ satisfy
	$$
	\|f\|_{\HL_{\infty}}=\sup_{n\in\N\cup\{0\}}\left(|\widehat{f}(n)|(n+1)\right)<\infty.	
	$$
To find a suitable dual of $D^p_{p-1}$ with $0<p\le1$ is not that straightforward. Lemma~\ref{lemma:dual-Dirichlet-2} in Section~\ref{Sec:dualities} states that $(D^p_{p-1})^\star\simeq\B^2$ via the $A^2_{\frac1p-1}$-pairing.	Here $A^2_\beta$ refers to the Bergman Hilbert space induced by the standard weight $(\beta+1)(1-|z|^2)^\beta$. Further, for $0<\alpha<\infty$ and $f\in\H(\D)$, the $\alpha$-Bloch space $\B^\alpha$ consists of $f\in\H(\D)$ such that
	$$
	\|f\|_{\B^\alpha}=\sup_{z\in\D}|f'(z)|(1-|z|^2)^\alpha+|f(0)|<\infty.
	$$
The proof of the duality relation $(D^p_{p-1})^\star\simeq\B^2$ is lengthy, and apart from standard tools, such as Green's theorem and continuous embeddings between different weighted Bergman spaces, it also relies on a use of  smooth universal Ces\'aro basis of polynomials. The last-mentioned creatures are used to show that a certain function, dependent of $p$, is a coefficient multiplier of $\B^2$. 

The next result is the counterpart of Theorem~\ref{Theorem:Hardy-p<2} in the case $p=1$. Observe that \eqref{Eq:p=1} is the limit case $p'=\infty$ of \eqref{eq:j11}, and that the supremum there is in fact the limit as $k\to\infty$ since the quantity over which the supremum is taken is increasing in $k$.

\begin{theorem}\label{Theorem:p=1'}
Let $g\in H^\infty$ such that $\widehat{g}(n)\geq0$ for all $n\in\mathbb{N}\cup\{0\}$, and $X_1\in\{H^1,D^1_{0},\HL_1\}$. Then $T_g:X_1\rightarrow H^\infty$ is bounded if and only if
	\begin{equation}\label{Eq:p=1}
	\sup_{k\in\N\cup\{0\}}\left((k+1)\sum_{n=0}^{\infty}\frac{(n+1)\widehat{g}(n+1)}{n+k+1}\right)<\infty.
	\end{equation}
Moreover,
	\begin{equation}\label{eq:j11'}
	\|T_g\|_{X_1\rightarrow H^\infty}
	\asymp\sup_{k\in\N\cup\{0\}}\left((k+1)\sum_{n=0}^{\infty}\frac{(n+1)\widehat{g}(n+1)}{n+k+1}\right).
	\end{equation}
\end{theorem}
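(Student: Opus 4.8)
The plan is to combine the reproducing kernel dual testing condition \cite[Theorem~2.2]{CPPR} with the duality relations $(\HL_1)^\star\simeq\HL_\infty$ (Lemma~\ref{le:dualHL1}) and $(D^1_0)^\star\simeq\B^2$ (Lemma~\ref{lemma:dual-Dirichlet-2}), together with the continuous inclusions $D^1_0\subset H^1\subset\HL_1$ from \eqref{Hardy1}. A continuous inclusion $X\subset Y$ forces $\|T_g\|_{X\to H^\infty}\lesssim\|T_g\|_{Y\to H^\infty}$, so it suffices to prove the upper bound $\|T_g\|_{\HL_1\to H^\infty}\lesssim R$ for the largest space and the lower bound $\|T_g\|_{D^1_0\to H^\infty}\gtrsim R$ for the smallest space, where $R$ denotes the right-hand side of \eqref{eq:j11'}; the three operator norms are then squeezed to be comparable to $R$, which yields both \eqref{Eq:p=1} and \eqref{eq:j11'}. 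Throughout I use $\widehat{g}(n)\ge0$ to reduce every extremal computation to the positive real axis.

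For the upper bound I apply \cite[Theorem~2.2]{CPPR} with the $H^2$-pairing (so $\beta_n\equiv1$), for which $(\HL_1)^\star\simeq\HL_\infty$. Expanding $\overline{G^{H(\beta)}_{g,z}(w)}=\int_0^z g'(\zeta)\overline{K_\zeta^{H^2}(w)}\,d\zeta$ with $K_\zeta^{H^2}(w)=\sum_n(w\overline{\zeta})^n$ shows that the $n$-th Maclaurin coefficient of $G^{H(\beta)}_{g,z}$ equals $\overline{\sum_{m\ge1}\frac{m\widehat{g}(m)}{m+n}z^{m+n}}$. Since $\widehat{g}(m)\ge0$, the modulus of this coefficient is dominated by its value at $z=|z|$, and letting $|z|\to1^-$ by monotone convergence gives $\sup_{z\in\D}\|G^{H(\beta)}_{g,z}\|_{\HL_\infty}=\sup_{n}(n+1)\sum_{m\ge1}\frac{m\widehat{g}(m)}{m+n}$, which is exactly $R$. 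Hence $\|T_g\|_{\HL_1\to H^\infty}\asymp R$, and the inclusions transfer the bound $\|T_g\|_{X_1\to H^\infty}\lesssim R$ to $X_1\in\{H^1,D^1_0\}$ as well.

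For the lower bound I apply \cite[Theorem~2.2]{CPPR} to $D^1_0$ with the $A^2_0$-pairing, so that the dual is $\B^2$ and $K_\zeta^{A^2_0}(w)=\sum_n(n+1)(w\overline{\zeta})^n$. The analogous computation gives that the $k$-th coefficient of $G^{A^2_0}_{g,z}$ at $z=r\in(0,1)$ equals $(k+1)\sum_{m\ge1}\frac{m\widehat{g}(m)}{m+k}r^{m+k}$, which increases to $(k+1)c_k$ as $r\to1^-$, where $c_k=\sum_{n\ge0}\frac{(n+1)\widehat{g}(n+1)}{n+k+1}$. The decisive structural fact, again forced by $\widehat{g}(n)\ge0$, is that $c_k$ is \emph{decreasing} in $k$. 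Because all coefficients are non-negative, the $\B^2$-seminorm of $G^{A^2_0}_{g,r}$ is at least $(1-s^2)^2\,(G^{A^2_0}_{g,r})'(s)$ for any $s\in(0,1)$; a single coefficient only recovers $c_k$ and loses the factor $k+1$, so instead I evaluate at $s=1-\tfrac1{k+1}$ and keep the whole block of frequencies $j\in[k/2,k]$. On this block $s^{j-1}\asymp1$ and $c_j\ge c_k$, so the block sum is $\gtrsim k^3 c_k$; multiplying by $(1-s^2)^2\asymp(k+1)^{-2}$ yields $\|G^{A^2_0}_{g,r}\|_{\B^2}\gtrsim(k+1)c_k$ after letting $r\to1^-$. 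Taking the supremum over $k$ gives $\sup_{z\in\D}\|G^{A^2_0}_{g,z}\|_{\B^2}\gtrsim R$, hence $\|T_g\|_{D^1_0\to H^\infty}\gtrsim R$, and the inclusions propagate this lower bound to $H^1$ and $\HL_1$.

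The main obstacle is precisely this lower bound. The subtlety is that the weight $(1-|w|^2)^2$ in the $\B^2$-seminorm annihilates exactly the factor $k+1$ carried by the coefficients of $G^{A^2_0}_{g,z}$, so neither testing against the monomials $z^k$ in $D^1_0$ (whose norm is $\asymp1$) nor extracting a single Bloch coefficient suffices—both routes return $c_k$ rather than $(k+1)c_k$. Recovering the missing factor requires summing a full block of frequencies and exploiting the monotonicity of $c_k$, which is available only because $\widehat{g}(n)\ge0$; this is the place where the sign hypothesis on the Maclaurin coefficients is genuinely used. Finally, that the supremum in \eqref{Eq:p=1} may be replaced by the limit as $k\to\infty$ follows from rewriting the summand as $\sum_{n\ge0}\frac{(n+1)\widehat{g}(n+1)}{1+n/(k+1)}$, which is manifestly increasing in $k$.
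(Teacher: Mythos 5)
Your proposal is correct and follows essentially the same route as the paper: the upper bound via $(\HL_1)^\star\simeq\HL_\infty$ (Lemma~\ref{le:dualHL1}) together with \cite[Theorem~2.2]{CPPR} and the inclusions \eqref{Hardy1}, and the lower bound by testing on $D^1_0$ via $(D^1_0)^\star\simeq\B^2$ (Lemma~\ref{lemma:dual-Dirichlet-2}) with the $A^2_0$-pairing, positivity of the coefficients reducing everything to the real axis. The only (harmless) divergence is the final extraction of the factor $k+1$: you keep the derivative form of the $\B^2$-seminorm and sum the frequency block $j\in[k/2,k]$ at $s=1-\frac{1}{k+1}$ using that $c_k$ decreases, whereas the paper passes to the equivalent $H^\infty_1$-norm of $G^{A^2_0}_{g,z}$ via \eqref{Eq-alpha-bloch} and uses an Abel tail-sum as $s\to1^-$ exploiting that $(k+1)c_k$ increases in $k$ --- two interchangeable elementary executions of the same idea.
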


Theorem~\ref{Theorem:p=1'} is relatively straightforward to establish once the tools needed for Theorem~\ref{Theorem:p=1} are on the table. 
Both of these theorems are proved in Section~\ref{Sec:2-3}.

Our last result concerns the case when $T_g$ acts from the Bloch space or $\BMOA$ to $H^\infty$. Recall that the classical Bloch space $\B$ is just the space $\B^1$ defined before Theorem~\ref{Theorem:p=1'}. Further, let
	$$
	\|f\|_{H^\infty_{\log}}=\sup_{z\in\D}|f(z)|\left(\log\frac{e}{1-|z|}\right)^{-1},
	$$
and recall that $\BMOA$ consists of the functions in the Hardy space $H^1$ that have
bounded mean oscillation on the boundary $\T$. The space $\BMOA$ can be equipped with several different norms~\cite{Daniel}. We will use the one given by
    $$
    \|g\|^2_{\BMOA}=\sup_{a\in\D}\frac{\int_{S(a)}|g'(z)|^2(1-|z|^2)\,dA(z)}{1-|a|}+|g(0)|^2,
    $$
where $S(a)=\{\z:1-|a|<|\z|<1,\,|\arg\z-\arg a|<(1-|a|)/2\}$ is the Carleson square induced by $a\in\D\setminus\{0\}$ and $S(0)=\D$.
It is well-known that
	\begin{equation}\label{Eq:embedding}
	\|f\|_{H^\infty_{\log}}\lesssim\|f\|_\B\lesssim\|f\|_{\BMOA}\lesssim\|f\|_{H^\infty},\quad f\in\H(\D).
	\end{equation}

Our last main result says that $T_g$ does not distinguish $\BMOA$, $\B$ and $H^\infty_{\log}$ when it acts boundedly or compactly from one of these spaces to $H^\infty$, if the symbol $g$ has non-negative Maclaurin coefficients.

\begin{theorem}\label{Bounded}
Let $X\subset\H(\D)$ be a Banach space such that $\BMOA\subset X\subset H^\infty_{\log}$ and
let $g\in H^{\infty}$ with $\widehat{g}(n)\geq0$ for all $n\in\mathbb{N}\cup\{0\}$. Then the following statements are equivalent:
\begin{itemize}
\item[(i)] $T_g:~H_{\log}^\infty\to H^\infty$ is bounded (equivalently compact);
\item[(ii)] $T_g:~X\to H^\infty$ is bounded (equivalently compact);
\item[(iii)] $T_g:~\BMOA\to H^\infty$ is bounded (equivalently compact);
\item[(iv)] $\sum_{n=0}^\infty\widehat{g}(n+1)\log(n+2)<\infty$;
\item[(v)] $\int_0^1M_{\infty}(r,g')\log\frac{e}{1-r}dr<\infty$.
\end{itemize}
Moreover,
	\begin{equation}\label{Eq:norm-estimates}
	\begin{split}
	\|T_g\|_{\BMOA\rightarrow H^\infty}
	&\asymp\|T_g\|_{X\rightarrow H^\infty}
	\asymp\|T_g\|_{H_{\log}^\infty\rightarrow H^\infty}
	\asymp\int_0^1M_\infty(r,g')\log\frac{e}{1-r}dr\\
	&\asymp\sum_{n=0}^\infty\widehat{g}(n+1)\log(n+2).
	\end{split}
	\end{equation}
\end{theorem}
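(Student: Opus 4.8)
The plan is to run the chain $(i)\Rightarrow(ii)\Rightarrow(iii)\Rightarrow(v)\Leftrightarrow(iv)\Rightarrow(i)$ for boundedness and to extract compactness from the sufficiency step. The implications $(i)\Rightarrow(ii)\Rightarrow(iii)$ for boundedness come for free: since $X$ is a Banach space with $\BMOA\subset X\subset H^\infty_{\log}$ and norm convergence in each of these spaces forces uniform convergence on compacta, the inclusions have closed graphs and are therefore continuous, so $\|f\|_{H^\infty_{\log}}\lesssim\|f\|_X\lesssim\|f\|_{\BMOA}$ for every $f\in\H(\D)$, whence $\|T_g\|_{\BMOA\to H^\infty}\lesssim\|T_g\|_{X\to H^\infty}\lesssim\|T_g\|_{H^\infty_{\log}\to H^\infty}$. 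It thus suffices to prove the sufficiency estimate $\|T_g\|_{H^\infty_{\log}\to H^\infty}\lesssim\int_0^1 M_\infty(r,g')\log\frac{e}{1-r}\,dr$, the necessity estimate $\int_0^1 M_\infty(r,g')\log\frac{e}{1-r}\,dr\lesssim\|T_g\|_{\BMOA\to H^\infty}$, and the arithmetic equivalence $(iv)\Leftrightarrow(v)$; these three facts close every equivalence and simultaneously yield all the comparisons in \eqref{Eq:norm-estimates}.

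For $(iv)\Leftrightarrow(v)$ I would exploit the non-negativity of the coefficients, which forces $M_\infty(r,g')=g'(r)=\sum_{m=0}^\infty(m+1)\widehat{g}(m+1)r^m$, since the maximum modulus of $g'$ on each circle is attained at the positive real point. Expanding and integrating term by term reduces matters to the elementary estimate $\int_0^1 r^m\log\frac{e}{1-r}\,dr\asymp\frac{\log(m+2)}{m+1}$, which follows from $-\log(1-r)=\sum_{k\ge1}r^k/k$ and the partial-fraction identity $\sum_{k\ge1}\frac{1}{k(m+k+1)}=\frac{1}{m+1}\sum_{k=1}^{m+1}\frac1k\asymp\frac{\log(m+2)}{m+1}$. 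Summing against $(m+1)\widehat{g}(m+1)$ then gives $\int_0^1 M_\infty(r,g')\log\frac{e}{1-r}\,dr\asymp\sum_{m=0}^\infty\widehat{g}(m+1)\log(m+2)$.

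For the sufficiency I would argue directly on the segment from $0$ to $z$: parametrising $\zeta=te^{i\arg z}$ and using $|f(\zeta)|\le\|f\|_{H^\infty_{\log}}\log\frac{e}{1-t}$ together with $|g'(\zeta)|\le M_\infty(t,g')$ yields $|T_gf(z)|\le\|f\|_{H^\infty_{\log}}\int_0^1 M_\infty(t,g')\log\frac{e}{1-t}\,dt$ uniformly in $z$, which is exactly the desired upper bound. I expect the same estimate to deliver compactness with no extra machinery: if the integral is finite and $\|f_k\|_{H^\infty_{\log}}\le1$, a normal families argument extracts a subsequence converging to some $f$ uniformly on compacta, while the vanishing tail $\int_\rho^1 M_\infty(t,g')\log\frac{e}{1-t}\,dt\to0$ as $\rho\to1^-$ controls $T_gf_k-T_gf$ near $\partial\D$ uniformly; splitting each $z$ with $|z|>\rho$ through the point $\rho z/|z|$ shows $T_gf_k\to T_gf$ in $H^\infty$, so $T_g\colon H^\infty_{\log}\to H^\infty$ is compact. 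Compactness on $X$ and on $\BMOA$ then follows by composing this compact operator with the bounded inclusions into $H^\infty_{\log}$.

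The necessity $(iii)\Rightarrow(v)$ is where the non-negativity pays off most cleanly. I would test $T_g$ against $f(z)=\log\frac{e}{1-z}$, which lies in $\BMOA$ with $\|f\|_{\BMOA}\asymp1$ and has non-negative coefficients. Since $f$ and $g'$ are then non-negative on $[0,1)$, the map $r\mapsto T_gf(r)=\int_0^r\log\frac{e}{1-t}\,g'(t)\,dt$ is non-negative and increasing, so $\|T_gf\|_{H^\infty}\ge\lim_{r\to1^-}T_gf(r)=\int_0^1 M_\infty(t,g')\log\frac{e}{1-t}\,dt$, giving $\int_0^1 M_\infty(t,g')\log\frac{e}{1-t}\,dt\lesssim\|T_g\|_{\BMOA\to H^\infty}$. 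I anticipate that the main obstacle is not any single estimate but keeping the bookkeeping airtight: confirming $\log\frac{e}{1-z}\in\BMOA$ with a universally controlled norm, and—more importantly—keeping the boundedness and compactness statements in step across all three domains, so that the single pair of estimates above simultaneously settles every equivalence in (i)--(v) and pins down all the comparisons in \eqref{Eq:norm-estimates}.
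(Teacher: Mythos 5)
Your proposal is correct, and its necessity step takes a genuinely different route from the paper's. The paper proves (iii)$\Rightarrow$(iv) through the abstract dual-kernel characterization of \cite[Theorem~2.5(v)]{CPPR}, namely $\|T_g\|_{\BMOA\to H^\infty}\asymp\sup_{z\in\D}\|G^{H^2}_{g,z}\|_{H^1}$, and then extracts the coefficient condition from the $H^1$-norms via Hardy's inequality and Fatou's lemma, obtaining (v) afterwards by the same term-by-term computation you perform; you instead prove (iii)$\Rightarrow$(v) directly by testing $T_g$ on the single function $f(z)=\log\frac{e}{1-z}\in\BMOA$, exploiting that both $f$ and $g'$ are nonnegative on $[0,1)$ (by positivity of the coefficients, which also gives $M_\infty(t,g')=g'(t)$), so that $\|T_gf\|_{H^\infty}\ge\lim_{r\to1^-}\int_0^r\log\frac{e}{1-t}\,g'(t)\,dt$ equals the full integral in (v) by monotone convergence. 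Your route avoids the reproducing-kernel machinery and Hardy's inequality entirely and is more elementary; the paper's route buys uniformity of method (the same dual-testing template drives all its main theorems) and produces the discrete condition (iv) without needing a felicitous test function. Your sufficiency estimate, the equivalence (iv)$\Leftrightarrow$(v) via $\int_0^1 r^m\log\frac{e}{1-r}\,dr\asymp\frac{\log(m+2)}{m+1}$, and the compactness argument by tail-splitting for sequences bounded in $H^\infty_{\log}$ and convergent on compacta all essentially coincide with the paper's, and your bookkeeping does close all of \eqref{Eq:norm-estimates}. Two small points deserve a line each: your closed-graph justification of $\|f\|_{H^\infty_{\log}}\lesssim\|f\|_X\lesssim\|f\|_{\BMOA}$ tacitly uses that norm convergence in the abstract space $X$ implies locally uniform convergence, i.e., that point evaluations are bounded on $X$; this must be read into the hypothesis ``Banach space $X\subset\H(\D)$'', and the paper makes the same implicit assumption when it asserts the corresponding chain of operator-norm inequalities. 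Likewise, in the compactness step you should note that the normal-families limit $f$ itself lies in $H^\infty_{\log}$ with $\|f\|_{H^\infty_{\log}}\le\liminf_k\|f_k\|_{H^\infty_{\log}}$, after which your splitting through $\rho z/|z|$ is airtight; the paper phrases the same argument through the equivalent sequential criterion that $\|T_g(f_n)\|_{H^\infty}\to0$ whenever $\{f_n\}$ is bounded in $H^\infty_{\log}$ and tends to zero uniformly on compacta.
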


The proof of Theorem~\ref{Bounded}, given in Section~\ref{Sec:Bloch}, reveals that 
	\begin{equation*}
	\|T_g\|_{H_{\log}^\infty\rightarrow H^\infty}
	\lesssim\int_0^1M_\infty(r,g')\log\frac{e}{1-r}dr
	\lesssim\sum_{n=0}^\infty|\widehat{g}(n+1)|\log(n+2)
	\end{equation*}
for each $g\in\H(\D)$. The hypothesis on the coefficients is only used when the right most quantity above is dominated by the operator norm.

Probably the most obvious election for the space $X$ in the statement of Theorem~\ref{Bounded} is the classical Bloch space $\B$. However, there are other choices for $X$ which arise naturally in the theory of  integral operators, see Section~\ref{Sec:Bloch} for further details.

It is worth mentioning that the hypotheses $g\in H^\infty$ in  Theorems~\ref{Theorem:Hardy-p<2}-\ref{Bounded} is not a restriction, because it is an obvious necessary condition so that $T_g: X\to H^\infty$ is bounded.

To this end, couple of words about the notation used in this paper. The letter $C=C(\cdot)$ will denote an absolute constant whose value depends on the parameters indicated
in the parenthesis, and may change from one occurrence to another. If there exists a constant
$C=C(\cdot)>0$ such that $a\le Cb$, then we write either $a\lesssim b$ or $b\gtrsim a$. In particular, if $a\lesssim b$ and
$a\gtrsim b$, then we denote $a\asymp b$ and say that $a$ and $b$ are comparable.

\section{Dualities}\label{Sec:dualities}

In this section we will discuss the duality relations employed to prove the main results of the paper. Apart from the well-known relation $(H^p)^\star\simeq H^{p'}$, $1<p<\infty$, we will need to know the dual spaces, with respect to appropriate pairings, of $D^p_{p-1}$ and $\HL_p$ for $0<p\le 1$ and $1<p<\infty$, respectively. 
 
The following lemma describes the dual of the Dirichlet-type space $D^p_{p-1}$ when $1<p<\infty$, and it will be needed in the proof of Theorem~\ref{Theorem:Hardy-p<2}. We believe that the result itself must be known at least by experts working on the field, but since we do not know an exact reference, we include a proof here.

\begin{lemma}\label{le:dualdp}
Let $1<p<\infty$. Then $(D^p_{p-1})^\star\simeq D^{p'}_{p'-1}$ via the $H^2$-pairing with equivalence of norms.
\end{lemma}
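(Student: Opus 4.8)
The plan is to identify, via a Littlewood--Paley identity, the $H^2$-pairing with an area pairing of derivatives, and then to reduce the statement to the duality of weighted Bergman spaces. For $f,g\in\H(\D)$ expanding in Maclaurin series and integrating term by term yields
\[
\langle f,g\rangle_{H^2}=\widehat{f}(0)\overline{\widehat{g}(0)}+2\int_\D f'(z)\overline{g'(z)}\log\frac1{|z|}\,dA(z).
\]
Since $\log\frac1{|z|}\asymp1-|z|$ on $\{\frac12\le|z|<1\}$ and the contribution of $\{|z|<\frac12\}$ is controlled by a point evaluation, the derivative map $f\mapsto(f(0),f')$ identifies $D^p_{p-1}$, up to equivalence of norms, with $\C\oplus A^p_{p-1}$, where $A^p_{p-1}$ is the weighted Bergman space with weight $(1-|z|^2)^{p-1}$; moreover it turns the $H^2$-pairing into (a constant multiple of, and up to the comparable weight replacement $\log\frac1{|z|}\asymp1-|z|^2$) the $A^2_1$-pairing $\int_\D h\bar k\,(1-|z|^2)\,dA$ of the derivatives $h=f'$, $k=g'$, plus a rank-one term at the origin.

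First I would prove the easy inclusion $D^{p'}_{p'-1}\hookrightarrow(D^p_{p-1})^\star$. Writing $1-|z|=(1-|z|)^{(p-1)/p}(1-|z|)^{(p'-1)/p'}$, which is legitimate because $\frac{p-1}p+\frac{p'-1}{p'}=1$, Hölder's inequality applied to the identity above gives
\[
|\langle f,g\rangle_{H^2}|\lesssim|\widehat{f}(0)|\,|\widehat{g}(0)|+\|f'\|_{A^p_{p-1}}\|g'\|_{A^{p'}_{p'-1}}\lesssim\|f\|_{D^p_{p-1}}\|g\|_{D^{p'}_{p'-1}}.
\]
Hence each $g\in D^{p'}_{p'-1}$ induces a bounded functional $L_g=\langle\cdot,g\rangle_{H^2}$ with $\|L_g\|\lesssim\|g\|_{D^{p'}_{p'-1}}$; the map $g\mapsto L_g$ is injective because $\langle z^n,g\rangle_{H^2}=\overline{\widehat{g}(n)}$ and the monomials lie in $D^p_{p-1}$.

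For the reverse direction I would use Hahn--Banach together with the Bergman projection. Given $L\in(D^p_{p-1})^\star$, composing with the isomorphism above lets me regard $L$ as a functional on the closed subspace $\{(f(0),f'):f\in D^p_{p-1}\}$ of $\C\oplus L^{p}((1-|z|^2)^{p-1}\,dA)$, and I extend it to the whole space. By $L^p$--$L^{p'}$ duality it is represented by a pair $(c,\Psi)$ with $\Psi\in L^{p'}((1-|z|^2)^{p-1}\,dA)$ and $|c|+\|\Psi\|\lesssim\|L\|$, so that $L(f)=c\,\widehat{f}(0)+\int_\D f'(z)\overline{\Psi(z)}\,(1-|z|^2)^{p-1}\,dA(z)$. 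Setting $k=P_1[\Psi(1-|z|^2)^{p-2}]$, where $P_1$ is the Bergman projection onto $A^2_1$, and letting $g$ be the antiderivative of $k$ normalized by $\widehat{g}(0)=\bar c$, the reproducing property of $P_1$ gives $\int_\D h\bar k(1-|z|^2)\,dA=\int_\D h\overline{\Psi}(1-|z|^2)^{p-1}\,dA$ for every analytic $h$, whence $L=L_g$ on the monomials and therefore everywhere.

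The main obstacle, and the only nonroutine point, is to show that the $g$ produced this way actually lies in $D^{p'}_{p'-1}$ with $\|g\|_{D^{p'}_{p'-1}}\lesssim\|L\|$; equivalently that $k\in A^{p'}_{p'-1}$ with $\|k\|_{A^{p'}_{p'-1}}\lesssim\|\Psi\|$. This is precisely the boundedness of the modified Bergman operator $\Psi\mapsto\int_\D\frac{\Psi(z)(1-|z|^2)^{p-1}}{(1-\bar z w)^{3}}\,dA(z)$ from $L^{p'}((1-|z|^2)^{p-1}dA)$ into $L^{p'}((1-|z|^2)^{p'-1}dA)$, which follows from the standard Forelli--Rudin/Schur-test estimates for Bergman-type kernels once one checks that the weights $\alpha=p-1$, $\beta=p'-1$ and the pairing exponent $\gamma=1$ satisfy the admissibility relation $\gamma=\frac\alpha p+\frac\beta{p'}$ (indeed $\frac{p-1}p+\frac{p'-1}{p'}=1$). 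Combining the two inclusions yields $(D^p_{p-1})^\star\simeq D^{p'}_{p'-1}$ via the $H^2$-pairing with equivalence of norms.
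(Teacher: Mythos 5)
Your first half (Green's identity plus H\"older) coincides with the paper's argument, and your second half takes a genuinely different route: the paper projects with respect to the logarithmic weight $\om(z)=-2\log|z|$, quoting the boundedness of $P_\om$ from \cite{PR2014} so that the $A^2_\om$-pairing of derivatives converts \emph{exactly} into the $H^2$-pairing by Green's formula, whereas you use Hahn--Banach plus the standard-weight projection $P_1$ and a two-weight Forelli--Rudin estimate. That estimate itself is correct: substituting $\Psi=\psi(1-|z|^2)^{-(p-1)/p'}$ turns your operator into one with kernel $(1-|w|^2)^{1/p}(1-|z|^2)^{1/p'}|1-\overline{z}w|^{-3}$ acting on $L^{p'}(dA)$, which is bounded since $-p'/p<1<p'\bigl(\tfrac1{p'}+1\bigr)$; your balance condition $\frac{p-1}{p}+\frac{p'-1}{p'}=1$ is exactly what makes this reduction work.

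There is, however, a genuine error in your final identification: the replacement $\log\frac1{|z|}\asymp1-|z|^2$ is harmless for norms but \emph{not} for pairings, and with $g$ the plain antiderivative of $k$ the claim ``$L=L_g$ on the monomials'' is false. Indeed, your representation gives, for $n\ge1$,
\begin{equation*}
L(z^n)=\int_\D nz^{n-1}\overline{k(z)}\,(1-|z|^2)\,dA(z)=\frac{\overline{\widehat{k}(n-1)}}{n+1},
\qquad\text{while}\qquad
\langle z^n,g\rangle_{H^2}=\overline{\widehat{g}(n)}=\frac{\overline{\widehat{k}(n-1)}}{n},
\end{equation*}
so the $(1-|z|^2)$-pairing of derivatives differs from the $H^2$-pairing by the coefficient multiplier $\frac{n}{n+1}$. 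The gap is repairable: define $g$ instead by $\widehat{g}(0)=\overline{c}$ and $\widehat{g}(n)=\widehat{k}(n-1)/(n+1)$ for $n\ge1$, so that $g'=k-\Lambda k$ with $\Lambda k(z)=z^{-2}\int_0^z\zeta k(\zeta)\,d\zeta$ (the multiplier $\frac{1}{m+2}$), which is bounded on $A^{p'}_{p'-1}$; then $g\in D^{p'}_{p'-1}$ with $\|g\|_{D^{p'}_{p'-1}}\lesssim\|L\|$ and $L=L_g$ holds. The paper's choice of the log-weight projection avoids this correction altogether, at the price of needing the less standard boundedness result for $P_\om$. You should also justify the reproducing identity for $P_1$ first for polynomials and extend by density of polynomials in $D^p_{p-1}$ (valid since $1<p<\infty$), as Fubini is not automatic for general $f'$.
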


\begin{proof}
Let us first show that each $g\in D^{p'}_{p'-1}$ induces a bounded linear functional on $D^p_{p-1}$. Green's theorem implies
	\begin{equation}\label{Green}
	\langle f,g\rangle_{H^2}=2\int_{\D}f'(\z)\overline{g'(\z)}\log\frac{1}{|\z|}\,dA(\z)+f(0)\overline{g(0)},
	\end{equation}
from which H\"{o}lder's inequality yields
	\begin{equation*}\begin{split}
|\langle f,g\rangle_{H^2}|
 &\lesssim \int_{\D}  |f'(\z)||g'(\z)|(1-|\z|)\,dA(\z)+|f(0)||g(0)|
\lesssim\|f\|_{D^p_{p-1}}\|g\|_{D^{p'}_{p'-1}},\quad f,g\in\H(\D),
\end{split}
	\end{equation*}
where the first step is an easy consequence of the inequality $-\log t\le\frac1t(1-t)$, valid for all $0<t\le1$, and the monotonicity of $M_p(r,h)$ for each $0<p<\infty$ and $h\in\H(\D)$.
Thus each $g\in D^{p'}_{p'-1}$ induces a bounded linear functional on $D^p_{p-1}$ via the $H^2$-pairing.

Let now $L$ be a bounded linear functional on $D^p_{p-1}$. Consider the weights $\om(z)=-2\log|z|$ and $\nu(z)=\left(-2\log|z|\right)^\frac1{p-1}$, defined in the punctured unit disc. The proof of \cite[Theorem~3]{PR2014} now shows that the Bergman projection $P_\om$, induced by $\om$, is bounded from $L^{p'}_\nu$ into itself because the weight $\left(\frac{\om}{\nu}\right)^p\nu=\om^{p-1}$ is sufficiently smooth. It then follows from the proof of \cite[Theorem~6]{PR2014} and standard arguments that $(A^p_{p-1})^\star\simeq A^{p'}_{p'-1}$ under the pairing
	$$
	\langle f,g\rangle_{A^2_{\omega}}=2\int_{\D}f(z)\overline{g(z)}\log\frac{1}{|z|}\,dA(z).
	$$
We note that this duality relation of the weighted Bergman spaces is essentially contained in \cite[Theorem~2.1]{L} as a special case, but with respect to a slightly different pairing. The method there would certainly work also in our setting and therefore offers an alternative way to deduce this duality. Getting back to the proof of the lemma, we observe that, for each $f\in D^p_{p-1}$, there exists $F=F_f\in A^p_{p-1}$ such that $\mathcal{I}(F)=f-f(0)$, where $\mathcal{I}(F)(z)=\int_0^zF(\zeta)\,d\zeta$. Further, $\mathcal{I}$ is an isometric mapping from $A^p_{p-1}$ to $D^p_{p-1}$, in particular, it is bounded. Therefore the composition $L\circ\mathcal{I}$ is a bounded linear functional on $A^p_{p-1}$, and hence there exists a unique $G\in A^{p'}_{p'-1}$ such that $\|G\|_{A^{p'}_{p'-1}}\lesssim\|L\circ\mathcal{I}\|\lesssim\|L\|$ and
	\begin{equation*}
	\begin{split}
	L(f)=L(f-f(0)+f(0))&=(L\circ\mathcal{I})(F)+f(0)L(1)\\
	&=2\int_{\D}F(z)\overline{G(z)}\log\frac{1}{|z|}dA(z)+f(0)L(1)\\
	&=2\int_{\D}f'(z)\overline{G(z)}\log\frac{1}{|z|}dA(z)+f(0)L(1).
	\end{split}
	\end{equation*}
Further, since $G\in A^{p'}_{p'-1}$, there exists a unique $g\in D^{p'}_{p'-1}$ such that $g'=G$ and $\overline{g(0)}=L(1)$. Consequently, there exists a unique $g\in D^{p'}_{p'-1}$ such that
	$$
	L(f)=2\int_{\D}f'(z)\overline{g'(z)}\log\frac{1}{|z|}dA(z)+f(0)\overline{g(0)}=\langle f,g\rangle_{H^2},
	$$
where the last identity follows from \eqref{Green}. Moreover, $\|g\|^{p'}_{D^{p'}_{p'-1}}=\|G\|^{p'}_{A^{p'}_{p'-1}}+|L(1)|^{p'}\lesssim\|L\|^{p'}$, and the assertion is proved.
\end{proof}

To prove Theorems~\ref{Theorem:p=1} and~\ref{Theorem:p=1'} we need to know the dual of $D^p_{p-1}$ with $0<p\le1$. In order to do that, some more notation is needed. For $0<\alpha<\infty$, the space $H^\infty_\alpha$ consists of $f\in\H(\D)$ such that
	$$
	\|f\|_{H^\infty_\alpha}=\sup_{z\in\D}|f(z)|(1-|z|^2)^\alpha<\infty.
	$$
It is well-known that 
	\begin{equation}\label{Eq-alpha-bloch}
	\|f\|_{H^\infty_\alpha}\asymp\|f\|_{\B^{\alpha+1}},\quad f\in\H(\D),
	\end{equation}
for each $0<\alpha<\infty$. 

We will also need background on certain smooth polynomials defined in terms of Hadamard products. Recall that the Hadamard product of $f\in\H(\D)$ and $g\in\H(\D)$ is formally defined as
    $$
    (f\ast g)(z)=\sum_{k=0}^\infty\widehat{f}(k)\widehat{g}(k)z^k,\quad z\in\D.
    $$
A direct calculation shows that
    \begin{equation}\label{eq:hadprod}
    (f\ast g)(r^2e^{it})
    =\frac{1}{2\pi}\int_{-\pi}^\pi f(re^{i(t+\theta)})g(re^{-i\t})\,d\t.
    \end{equation}
If  $W(z)=\sum_{k\in J}b_kz^k$ is a polynomial and $f\in\H(\D)$, then the Hadamard product
    $$
    (W\ast f)(z)=\sum_{k\in J}b_k\widehat{f}(k)z^k
    $$
is well defined. Further, if $\Phi:\mathbb{R}\to\C$ is a $C^\infty$-function with compact support $\supp(\Phi)$ in $(0,\infty)$, set
    $$
    A_{\Phi,m}=\max_{s\in\mathbb{R}}|\Phi(s)|+\max_{s\in\mathbb{R}}|\Phi^{(m)}(s)|,
    $$
and consider the polynomials
   \begin{equation}\label{eq:polynomials}
    W_n^\Phi(z)=\sum_{k\in\mathbb
    Z}\Phi\left(\frac{k}{n}\right)z^{k},\quad n\in\N.
    \end{equation}
With this notation we can state the next auxiliary result that follows by \cite[p.~111--113]{Pavlovic}.

\begin{lettertheorem}\label{th:cesaro}
Let $\Phi:\mathbb{R}\to\C$ be a $C^\infty$-function such that
$\supp(\Phi)\subset(0, \infty)$ is compact. Then for each $p\in(0,\infty)$ and $m\in\N$ with $mp>1$, there exists a constant
$C=C(p)>0$ such that
    $$
    \|W_N^\Phi\ast f\|_{H^p}\le C A_{\Phi,m}\|f\|_{H^p}
    $$
for all $f\in H^p$ and $N\in\N$.
\end{lettertheorem}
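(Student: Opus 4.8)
The plan is to realize the Hadamard multiplier $f\mapsto W_N^\Phi\ast f$ as convolution against an explicit summability kernel and then to estimate that kernel. By \eqref{eq:hadprod}, for $0<\rho<1$ one has
\[
(W_N^\Phi\ast f)(\rho^2 e^{it})=\frac1{2\pi}\int_{-\pi}^\pi W_N^\Phi(\rho e^{i(t+\theta)})f(\rho e^{-i\theta})\,d\theta,
\]
so on each circle $W_N^\Phi\ast f$ is the circular convolution of $f_\rho=f(\rho e^{i\cdot})$ with the polynomial kernel $K_N(\theta)=W_N^\Phi(e^{i\theta})=\sum_{k}\Phi(k/N)e^{ik\theta}$, composed with the Poisson kernel (harmless for the $L^1$ and $L^p$ norms involved). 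Since $\supp(\Phi)$ is a compact subset of $(0,\infty)$, the spectrum of $K_N$ lies in a band $[aN,bN]$ with $0<a<b$, and the number of non-zero coefficients is $O(N)$.

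The first key step is the pointwise kernel estimate
\[
|K_N(\theta)|\lesssim A_{\Phi,m}\,\frac{N}{(1+N|\theta|)^m},\quad |\theta|\le\pi,
\]
obtained by applying Abel summation $m$ times. The trivial bound $|K_N(\theta)|\lesssim N\max_s|\Phi(s)|$ handles $|\theta|\lesssim 1/N$, while for larger $\theta$ each summation by parts produces a factor $|e^{i\theta}-1|^{-1}\asymp|\theta|^{-1}$ and replaces $\Phi(k/N)$ by its $m$-th finite difference, which satisfies $|\Delta^m\Phi(k/N)|\le N^{-m}\max_s|\Phi^{(m)}(s)|$; summing the $O(N)$ surviving terms gives the factor $N^{1-m}\max_s|\Phi^{(m)}(s)|$. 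This is precisely where both constituents of $A_{\Phi,m}$ enter.

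Once this estimate is available I would split into regimes. For $m\ge2$ (in particular whenever $p=1$, since $mp>1$ forces $m\ge2$ there) it gives $\|K_N\|_{L^1(\T)}\lesssim A_{\Phi,m}$ uniformly in $N$, so Young's inequality on each circle yields $M_p(\rho^2,W_N^\Phi\ast f)\lesssim A_{\Phi,m}M_p(\rho,f)$ for every $p\ge1$, whence the claim after taking the supremum in $\rho$. For the remaining range $1<p<\infty$ with $m=1$, Young fails, but the multiplier sequence $\{\Phi(k/N)\}_k$ has uniformly (in $N$) bounded variation over dyadic blocks, controlled by $\max_s|\Phi'(s)|\le A_{\Phi,1}$, and the Marcinkiewicz multiplier theorem delivers the uniform $L^p$, hence $H^p$, bound.

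The hard part is the range $0<p\le1$ (where, note, $mp>1$ automatically forces $m\ge2$), since neither Young nor the multiplier theorem applies. Here I would invoke the atomic, or grand maximal function, characterisation of $H^p$ and reduce to a uniform bound on $\|W_N^\Phi\ast a\|_{H^p}$ over $p$-atoms $a$. Splitting the integral defining the $H^p$ norm into a fixed multiple of the supporting arc and its complement, the near part is controlled by Hölder's inequality together with the $L^2$ bound, while the far part is treated by subtracting the Taylor polynomial of $K_N$ (using the vanishing moments of $a$) and invoking the smoothness estimates $|K_N^{(j)}(\theta)|\lesssim A_{\Phi,m}N^{j+1}(1+N|\theta|)^{-m}$ for $0\le j\le m$, obtained as above by applying the summation-by-parts argument to the $C^\infty$-functions $s\mapsto (is)^j\Phi(s)$, whose relevant maxima are controlled by $A_{\Phi,m}$ via a Kolmogorov-type interpolation inequality. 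The resulting tail integral converges exactly when $m>1/p$, that is when $mp>1$, which is the role of that hypothesis; matching the admissible smoothness order $m$ of the kernel with the number of vanishing moments of the atoms, while keeping every constant of the form $A_{\Phi,m}$ and uniform in $N$, is the main technical obstacle.
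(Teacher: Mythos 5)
First, a bibliographic point: the paper does not prove Theorem~\ref{th:cesaro} at all — it is quoted from Pavlovi\'c's book (the citation to pp.~111--113 of \cite{Pavlovic}), so your attempt can only be compared with that source. Your opening move — reducing via \eqref{eq:hadprod} to circular convolution with $K_N(\theta)=\sum_k\Phi(k/N)e^{ik\theta}$ and proving $|K_N(\theta)|\lesssim A_{\Phi,m}N(1+N|\theta|)^{-m}$ by $m$-fold Abel summation — is exactly the kernel estimate on which the classical proof rests, and your treatment of $p\ge1$ (Young when $m\ge2$, Marcinkiewicz when $m=1<p$) is fine. Where you genuinely diverge is the range $0<p\le1$. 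The classical argument does not use atoms: it combines the kernel estimate, which gives $\|W_N^\Phi\|_{H^p}^p\lesssim A_{\Phi,m}^p\,N^p\int_{-\pi}^{\pi}(1+N|\theta|)^{-mp}\,d\theta\asymp A_{\Phi,m}^p\,N^{p-1}$ precisely because $mp>1$, with the convolution inequality for polynomials $W$ of degree at most $n$ in $H^p$, $0<p\le1$, namely $\|W\ast f\|_{H^p}\le C(p)(n+1)^{1/p-1}\|W\|_{H^p}\|f\|_{H^p}$; since $\deg W_N^\Phi\lesssim N$, the two bounds multiply out to $CA_{\Phi,m}\|f\|_{H^p}$ in one line. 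That route stays entirely inside analytic $H^p$ and makes the role of $mp>1$ transparent, whereas in your scheme the same hypothesis is smeared across the moment count of the atoms and the convergence of the tail integral. Your atomic/molecular route can be made to work, but it is decidedly heavier machinery for the same conclusion.

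Two concrete cautions if you carry your version out. First, for $0<p\le1$ you must actually transfer between analytic $H^p(\D)$ and real-variable atomic $H^p$ on $\T$ (boundary distributions in both directions), and a uniform bound on atoms does not by itself bound the operator on $H^p$ for general operators; you need to run the molecular estimate for $K_N\ast a$ honestly, using kernel derivatives up to order $\lfloor 1/p\rfloor$ — which your hypothesis supplies, since $m>1/p$ — together with the vanishing moments of the atoms. So "invoke the atomic characterisation" is a placeholder for a genuine computation, not a citation. Second, track your constants: the $O(N)$ count of nonvanishing coefficients, the Landau--Kolmogorov control of the intermediate derivatives of $s\mapsto(is)^j\Phi(s)$, and the dyadic-variation bound in Marcinkiewicz all depend on the diameter and location of $\supp(\Phi)$, not only on $p$ and $m$. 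This dependence is intrinsic and cannot be removed: a multiplier that is flat on a long interval with unit-scale cutoffs at both ends has $A_{\Phi,m}\asymp1$ for every $m$, yet its $H^1$-multiplier norm grows like the logarithm of the support length (test against modulated Fej\'er kernels, whose analytic projections have $L^1$-norm $\asymp\log$ of the degree). So the bare "$C=C(p)$" in the statement must be read with the support normalised, which costs nothing thanks to the rescaling identity $W_N^{\Phi(\cdot/T)}=W_{NT}^{\Phi}$; your write-up should either fix $\supp(\Phi)\subset[a,b]$ at the outset or record the dependence explicitly.
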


Theorem~\ref{th:cesaro} shows that the polynomials $\{W_n^\Phi\}_{n\in\N}$ can be seen as a universal C\'esaro basis for $H^p$ for any $0<p<\infty$. A particular case of the previous construction is useful for our purposes. By following \cite[Section~2]{JevPac98}, let $\Psi:\mathbb{R}\to\mathbb{R}$ be a $C^\infty$-function such that
    \begin{enumerate}
    \item $\Psi\equiv1$ on $(-\infty,1]$,
    \item $\Psi\equiv0$ on $[2,\infty)$,
    \item $\Psi$ is decreasing and positive on $(1,2)$,
    \end{enumerate}
and set $\psi(t)=\Psi\left(\frac{t}{2}\right)-\Psi(t)$ for all $t\in\mathbb{R}$. Let $V_0(z)=1+z$
and
    \begin{equation}\label{vn}
    V_n(z)=W^\psi_{2^{n-1}}(z)=\sum_{k=0}^\infty
    \psi\left(\frac{k}{2^{n-1}}\right)z^k=\sum_{k=2^{n-1}}^{2^{n+1}-1}
    \psi\left(\frac{k}{2^{n-1}}\right)z^k,\quad n\in\N.
    \end{equation}
These polynomials have the following properties with regard to
smooth partial sums, see \cite[p. 175--177]{JevPac98} or \cite[p. 143--144]{Pabook2} for details:
    \begin{equation}
    \begin{split}\label{propervn}
    f(z)&=\sum_{n=0}^\infty (V_n\ast f)(z),\quad f\in\H(\D),\\
    \|V_n\ast f\|_{H^p}&\le C\|f\|_{H^p},\quad f\in H^p,\quad 0<p<\infty,\\
    \|V_n\|_{H^p}&\asymp 2^{n(1-1/p)}, \quad 0< p<\infty.
    \end{split}
    \end{equation}

With these preparations we can describe the dual of $D^p_{p-1}$ with $0<p\le1$.

\begin{lemma}\label{lemma:dual-Dirichlet-2}
Let $0<p\le1$. Then $(D^p_{p-1})^\star\simeq\B^2$ via the $A^2_{\frac1p-1}$-pairing with equivalence of norms.
\end{lemma}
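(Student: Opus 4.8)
The plan is to prove the two inclusions separately, as in the proof of Lemma~\ref{le:dualdp}. Throughout write $\gamma_n=\|z^n\|_{A^2_{1/p-1}}^2$, so that $\langle f,g\rangle_{A^2_{1/p-1}}=\lim_{r\to1^-}\sum_n\widehat f(n)\overline{\widehat g(n)}\gamma_nr^n$ and a direct computation with Gamma functions gives $\gamma_n\asymp(n+1)^{-1/p}$.

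For the easy inclusion $\B^2\hookrightarrow(D^p_{p-1})^\star$, I would first record a Green-type identity in the spirit of \eqref{Green}, namely $\frac1p\int_\D f\overline{g_r}(1-|z|^2)^{1/p-1}\,dA=f(0)\overline{g_r(0)}+c_p\int_\D f'\overline{g_r'}(1-|z|^2)^{1/p+1}\,dA$ for the dilations $g_r$, and then pass to the limit $r\to1^-$ to represent the pairing. Using $|g_r'(z)|\le\|g\|_{\B^2}(1-|z|^2)^{-2}$, this dominates $|\langle f,g\rangle_{A^2_{1/p-1}}|$ by $\|g\|_{\B^2}\int_\D|f'(z)|(1-|z|^2)^{1/p-1}\,dA+|f(0)||g(0)|$. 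The remaining integral equals $\|f'\|_{A^1_{1/p-1}}$, and since $f'\in A^p_{p-1}$ with $\frac{(p-1)+2}{p}=\frac1p+1=\frac{(1/p-1)+2}{1}$, the critical (equality) case of the continuous embedding $A^p_{p-1}\hookrightarrow A^1_{1/p-1}$ gives $\|f'\|_{A^1_{1/p-1}}\lesssim\|f'\|_{A^p_{p-1}}\lesssim\|f\|_{D^p_{p-1}}$. Hence each $g\in\B^2$ induces a bounded functional of norm $\lesssim\|g\|_{\B^2}$.

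For the converse, given $L\in(D^p_{p-1})^\star$ I would define $g$ by $\overline{\widehat g(n)}=L(z^n)/\gamma_n$; since $\|z^n\|_{D^p_{p-1}}\asymp1$ this yields $|\widehat g(n)|\lesssim\|L\|(n+1)^{1/p}$, so $g\in\H(\D)$, and $L(f)=\langle f,g\rangle_{A^2_{1/p-1}}$ holds on polynomials, which are dense in $D^p_{p-1}$. The point is to upgrade this to $g\in\B^2$ with $\|g\|_{\B^2}\lesssim\|L\|$. I would invoke $\B^2\simeq H^\infty_1$ from \eqref{Eq-alpha-bloch} together with the growth-space block characterization $\|g\|_{H^\infty_1}\asymp\sup_n2^{-n}\|V_n\ast g\|_{H^\infty}$, reducing matters to $\|V_n\ast g\|_{H^\infty}\lesssim2^n\|L\|$. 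Setting $h_{n,w}(z)=\sum_k\psi(k/2^{n-1})\gamma_k^{-1}\overline w^kz^k$, one checks $\overline{(V_n\ast g)(w)}=L(h_{n,w})$, so it suffices to prove $\sup_{|w|=1}\|h_{n,w}\|_{D^p_{p-1}}\lesssim2^n$.

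This last estimate is the main obstacle, and the place where the smooth universal Cesàro basis enters. By rotation invariance it suffices to treat $w=1$. I would first compute the block norm $\|V_n\|_{D^p_{p-1}}\asymp2^{n(1-1/p)}$ from \eqref{propervn} and the fact that $\int_\D|P|^p(1-|z|)^{p-1}\,dA\asymp2^{-np}\|P\|_{H^p}^p$ for polynomials $P$ with spectrum in $[2^{n-1},2^{n+1}]$. Then, extending $\gamma_x^{-1}$ to a $C^\infty$ function of a real variable via the Gamma function and using $\gamma_x^{-1}\asymp x^{1/p}$ with matching derivative bounds, I would write $h_{n,1}=2^{(n-1)/p}\,(W^{\Theta_n}_{2^{n-1}}\ast V_n)$ for smooth $\Theta_n$ with $\sup_nA_{\Theta_n,m}<\infty$; this uniform control of $A_{\Theta_n,m}$ is one delicate point. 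The tool needed is the $D^p_{p-1}$-analogue of Theorem~\ref{th:cesaro}, $\|W^{\Theta}_N\ast f\|_{D^p_{p-1}}\lesssim A_{\Theta,m}\|f\|_{D^p_{p-1}}$, obtained by transferring the $H^p$ (or $A^p_{p-1}$) statement through the isometry $\mathcal{I}$; this is precisely the assertion that a $p$-dependent function acts as a coefficient multiplier. Crucially it must be applied to the single block $V_n$ rather than to the Cauchy kernel (which only gives the lossy bound $2^{n/p}$), yielding $\|h_{n,1}\|_{D^p_{p-1}}\lesssim2^{n/p}\|V_n\|_{D^p_{p-1}}\asymp2^n$ and completing the argument. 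Making the transfer of the Cesàro estimate to $D^p_{p-1}$ rigorous, alongside the uniform multiplier bounds, is the technical heart of the proof.
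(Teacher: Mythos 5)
Your proposal is correct in outline, but in the hard direction it follows a genuinely different route from the paper. The forward inclusion is essentially identical in both: a Green-type estimate reduces the pairing to $\|g\|_{\B^2}\,\|f'\|_{A^1_{\frac1p-1}}$ and the critical-index embedding $A^p_{p-1}\subset A^1_{\frac1p-1}$ of Luecking finishes, exactly as in the paper; note only that your identity with kernel $c_p(1-|z|^2)^{\frac1p+1}$ is not literally exact near the origin, where the true kernel $2\int_{|z|}^1\log\frac{r}{|z|}(1-r^2)^{\frac1p-1}r\,dr$ behaves like $\log\frac{e}{|z|}$ (the paper keeps the factor $\frac1{|z|}$ for this reason) --- harmless, since only the upper estimate is used. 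For the converse, the paper composes $L$ with the isometry $\mathcal{I}$ and invokes Zhu's duality $(A^p_{p-1})^\star\simeq\B$ via the same pairing \cite{Zhu2}, so that the only remaining work is to show that the moment-ratio series $\lambda_p(z)=\sum_n\frac{w_{n,p}}{w_{n+1,p}}z^{n+1}$ is a coefficient multiplier of $\B^2$, which is where Theorem~\ref{th:cesaro} enters (through $M_1(r,D\lambda_p)\lesssim(1-r)^{-1}$). You bypass the Bergman duality altogether: you define $g$ coefficientwise from $L(z^n)$ and verify $\|g\|_{\B^2}\lesssim\|L\|$ by testing $L$ on the block test functions $h_{n,w}$, which requires three ingredients the paper does not: the smooth-block characterization of the growth space $H^\infty_1\simeq\B^2$ (via \eqref{Eq-alpha-bloch}), the $D^p_{p-1}$-analogue of Theorem~\ref{th:cesaro} (which, since the weight is radial, does transfer at once from the $H^p$ statement via $(W\ast F)_r=W\ast F_r$ and integration in $r$, modulo the harmless index shift of the symbol under differentiation), and the upper block-norm bound $\|V_n\|_{D^p_{p-1}}\lesssim2^{n(1-1/p)}$; the uniform bounds on $A_{\Theta_n,m}$ follow from Gamma-function asymptotics exactly as the paper's bounds for $F_n(x)=x\frac{w_{x-1,p}}{w_{x,p}}r^x$ do. Structurally, then, both proofs rest on the same two pillars --- Luecking's embedding and uniform Ces\`aro-type symbol estimates for Beta-function ratios --- but the paper outsources the dual representation to \cite{Zhu2} and concentrates the technical work in one multiplier lemma, whereas yours is self-contained at the level of duality at the price of two additional (standard, but asserted rather than proved) facts; your remark that the test functions must be built on the single block $V_n$ rather than the reproducing kernel, which would lose a factor $2^{n(\frac1p-1)}$, identifies precisely why the naive testing argument fails for $p<1$, and your closing density argument (polynomials dense in $D^p_{p-1}$ plus the already-proven boundedness of the pairing against $g\in\B^2$) legitimately upgrades the representation from polynomials to all of $D^p_{p-1}$.
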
	

\begin{proof}
Let $f,g\in\H(\D)$. Then Green's formula and Fubini's theorem yield
	\begin{equation*}
	\begin{split}
	|\langle f,g\rangle_{A^2_{\frac1p-1}}|
	&=\left|\int_\D f(z)\overline{g(z)}(1-|z|^2)^{\frac1p-1}\,dA(z)\right|\\
	&= \left| \int_0^1 \left(2\int_\D f'(rz)r\overline{g'(rz)r}\log\frac1{|z|}\,dA(z)+f(0)\overline{g(0)}\right)
	(1-r^2)^{\frac1p-1}r\,dr\right|
    \\
	&\le2\int_\D|f'(\z)|g'(\z)|\left(\int_{|\z|}^1\log\frac{r}{|\z|}(1-r^2)^{\frac1p-1}r\,dr\right)\,dA(\z)+|f(0)||g(0)|.
	\end{split}
	\end{equation*}
The inequality $-\log t\le\frac1t(1-t)$, valid for all $0<t\le1$, now gives
	$$
	\int_{|\z|}^1\log\frac{r}{|\z|}(1-r^2)^{\frac1p-1}r\,dr
	\le2^\frac1p\log\frac{1}{|\z|}\int_{|\z|}^1(1-r)^{\frac1p-1}\,dr
	\le p2^\frac1p\frac{(1-|\z|)^{1+\frac1p}}{|\z|},\quad \z\in\D\setminus\{0\}.
	$$
By using this and the continuous embedding $A^{p}_{p-1}\subset A^1_{\frac1p-1}$, valid for $0<p\le1$ by \cite[Theorem~1]{L2}, we deduce
	\begin{equation*}
	\begin{split}
	|\langle f,g\rangle_{A^2_{\frac1p-1}}|
	&\le p2^\frac1p\int_\D|f'(\z)||g'(\z)|\frac{(1-|\z|)^{1+\frac1p}}{|\z|}\,dA(\z)+|f(0)||g(0)|\\
	&\lesssim\|g\|_{\B^2}\int_\D|f'(\z)|(1-|\z|)^{\frac1p-1}\,dA(\z)+|f(0)||g(0)|
	\lesssim\|g\|_{\B^2}\|f\|_{D^p_{p-1}},
	\end{split}
	\end{equation*}
and hence each $g\in\B^2$ induces a bounded linear functional on $D^p_{p-1}$ via the $A^2_{\frac1p-1}$-pairing.

Let $L\in(D^p_{p-1})^\star$, and recall that $\mathcal{I}(F)(z)=\int_0^zF(\zeta)\,d\zeta$. Then $|(L\circ\mathcal{I})(F)|\lesssim\|\mathcal{I}(F)\|_{D^p_{p-1}}=\|F\|_{A^p_{p-1}}$ for all $F\in A^p_{p-1}$. Therefore $L\circ\mathcal{I}\in(A^p_{p-1})^\star$. Since $(A^p_{p-1})^\star$ is isomorphic to the Bloch space via the $A^2_{\frac1p-1}$-pairing by \cite[Theorem~A]{Zhu2}, there exists a unique $G\in\B$ such that $\|G\|_\B\lesssim\|L\circ\mathcal{I}\|\lesssim\|L\|$ and $(L\circ\mathcal{I})(F)=\langle F,G\rangle_{A^2_{\frac1p}-1}$ for all $F\in A^p_{p-1}$. For each $f\in D^p_{p-1}$, there exists $F=F_f\in A^p_{p-1}$ such that $\mathcal{I}(F)=f-f(0)$. Therefore 
	\begin{equation*}
	\begin{split}
	L(f)&=L(f-f(0)+f(0))=L(f-f(0))+f(0)L(1)\\
	&=L(\mathcal{I}(F))+f(0)L(1)
	=\langle F,G\rangle_{A^2_{\frac1p-1}}+f(0)L(1)\\
	&=\langle f',G\rangle_{A^2_{\frac1p-1}}+f(0)L(1),\quad f\in D^p_{p-1}.
	\end{split}
	\end{equation*}
By denoting
	$$
	w_{n,p}=\int_0^1r^{2n+1}(1-r^2)^{\frac1p-1}\,dr,\quad n\in\N\cup\{0\},
	$$
we deduce
	\begin{equation*}
	\begin{split}
	\langle f',G\rangle_{A^2_{\frac1p-1}}
	&=\lim_{r\to 1^-}\int_\D rf'(rz)\overline{G(z)}(1-|z|^2)^{\frac1p-1}\,dA(z)\\
	&= \lim_{r\to 1^-}\int_\D \left(\sum_{n=0}^\infty\widehat{f}(n+1)(n+1)r(rz)^n \right) \left(\sum_{k=0}^\infty\overline{\widehat{G}(k)}\overline{z}^k
    \right)(1-|z|^2)^{\frac1p-1}\,dA(z)\\
	&= \lim_{r\to 1^-}2\pi\sum_{n=0}^\infty\widehat{f}(n+1)r^{n+1}\left(\frac{w_{n,p}}{w_{n+1,p}}(n+1)\overline{\widehat{G}(n)}\right)w_{n+1,p}\\
	&=\lim_{r\to 1^-}\int_\D \sum_{n=0}^\infty\widehat{f}(n+1)(rz)^{n+1}
	\sum_{k=0}^\infty\left(\frac{w_{k,p}}{w_{k+1,p}}(k+1)\overline{\widehat{G}(k)}\right)\overline{z}^{k+1}(1-|z|^2)^{\frac1p-1}\,dA(z)\\
	&=\langle f-f(0),K_p\rangle_{A^2_{\frac1p-1}},
	\end{split}
	\end{equation*}
where
	$$
	K_p(z)=\sum_{k=0}^\infty\frac{w_{k,p}}{w_{k+1,p}}(k+1)\widehat{G}(k) z^{k+1},\quad z\in\D.
	$$
In the case $p=1$ we have
	$$
	K_1(z)=\sum_{k=0}^\infty(k+2)\widehat{G}(k) z^{k+1}
	=\frac{d}{dz}\left(z^2G(z)\right),\quad z\in\D,
	$$
and hence $K_1\in\B^2$ by \eqref{Eq-alpha-bloch}. To obtain the same conclusion for each $0<p<1$ we first observe that 
$J(z)=\sum_{k=0}^\infty(k+1)\widehat{G}(k)z^{k+1}=\frac{d}{dz}\left(zG(z)\right)$, and thus $J\in\B^2$. Therefore it suffices to show that 
$\lambda_p(z)=\sum_{n=0}^\infty\frac{w_{n,p}}{w_{n+1,p}}z^{n+1}$ is a coefficient multiplier of $\B^2$ for each $0<p\le 1$.

To see this, for each $\beta\in\mathbb{N}$, denote $D^\beta f(z)=\sum_{n=0}^\infty (n+1)^\beta\widehat{f}(n)z^n$ for all
 $f\in\H(\D)$, and for simplicity write $Df$ instead of $D^1f$. We claim that
\begin{equation}\label{eq:M_1}
M_1(r,D\lambda_p)\lesssim\frac1{1-r},\quad 0\le r<1,
\end{equation}
the proof of which is postponed for a moment. Direct calculations show that
	$$
	\|f\|_{\B^2}\asymp\sup_{z\in\D}|Df(z)|(1-|z|)^2
	\asymp\sup_{z\in\D}|D^2f(z)|(1-|z|)^3,\quad f\in\H(\D),
	$$
and hence \eqref{eq:M_1} yields
	\begin{equation*}
	\begin{split}
	|D^2(f*\lambda_p)(r^2e^{it})|
	&=|(Df*D\lambda_p)(r^2e^{it})|
	=\left|\frac1{2\pi}\int_0^{2\pi}Df(re^{i(t+\theta)})D\lambda_p(re^{-i\theta})\,d\theta\right|\\
	&\le M_\infty(r,Df)M_1(r,D\lambda_p)\lesssim\frac{M_1(r,D\lambda_p)}{(1-r)^2}\lesssim\frac{1}{(1-r)^3},\quad f\in\B^2.
	\end{split}
	\end{equation*}	
It follows that $f*\lambda_p\in\B^2$ for all $f\in\B^2$ and $0<p\le1$. Thus $K_p\in\B^2$ for each $0<p\le1$. By choosing $H_p=K_p+\frac{\overline{L(1)}}{\omega_{0,p}}\in\B^2$, we deduce $L(f)=\langle f,H_p\rangle_{A^2_{\frac1p-1}}$ for all $f\in D^p_{p-1}$. 

To complete the proof, it remains to establish~\eqref{eq:M_1}. To do this, we will use the families of polynomials defined by \eqref{eq:polynomials} and \eqref{vn}. It follows from \eqref{propervn} that
    \begin{equation}\label{fbnorm}
    M_1(r,D\lambda_p)=\|(D\lambda_p)_r\|_{H^1}\le C(p)+\sum_{n=2}^\infty\|V_n\ast(D\lambda_p)_r\|_{H^1},
    \end{equation}
where $(D\lambda_p)_r(z)=\sum_{n=1}^\infty n\frac{w_{n-1,p}}{w_{n,p}}r^{n}z^{n}$. Next, for each $n\in\N\setminus\{1\}$ and
$r\in\left[\frac12,1\right)$, consider
    $$
    F_n(x)=x\frac{w_{x-1,p}}{w_{x,p}}r^{x}\chi_{[2^{n-1},2^{n+1}]}(x),\quad x\in\RR.
    $$
Since for each radial weight $\nu$ there exists a constant $C=C(\nu)>0$ such that
    \begin{equation*}
    \int_0^1 s^x\left(\log\frac1s\right)^n\nu(s)\,ds\le C\int_0^1s^x\nu(s)\,ds, \quad n\in\{1,2\},\quad x\ge 2,
    \end{equation*}
it follows by a direct calculation that
    $$
    |F_n''(x)|\le C|F_n(x)|, \quad n\in\N\setminus\{1\}, \quad r\in\left[\frac12,1\right),\quad x\ge2,
    $$
for some constant $C=C(\om)>0$. Therefore,
    \begin{equation*}
    \begin{split}
    A_{F_n,2}&=\max_{x\in[2^{n-1},2^{n+1}]}|F_n(x)|+\max_{x\in[2^{n-1},2^{n+1}]}|F_n''(x)|
		\lesssim \max_{x\in[2^{n-1},2^{n+1}]}|F_n(x)|\\
    &
    \lesssim \max_{x\in[2^{n-1},2^{n+1}]} (x+1) r^{x+1}
    \lesssim 2^{n} r^{2^{n-1}},\quad n\in\N\setminus\{1\}.
    \end{split}
    \end{equation*}
For each $n\in\N\setminus\{1\}$, choose a $C^\infty$-function $\Phi_n$ with compact support contained in $[2^{n-2},2^{n+2}]$ such that
$\Phi_n=F_n$ on $[2^{n-1},2^{n+1}]$ and
    \begin{equation}\label{phin}
    A_{\Phi_n,2}=\max_{x\in\mathbb{R}}|\Phi_n(x)|+\max_{x\in\mathbb{R}}|\Phi''_n(x)|
    \lesssim 2^{n} r^{2^{n-1}},\quad n\in\N\setminus\{1\}.
    \end{equation}
Since
    $$
    W_1^{\Phi_n}(z)
    =\sum_{k\in\mathbb{Z}}\Phi_n\left(k\right)z^k
    =\sum_{k\in\mathbb{Z}\cap[2^{n-2},2^{n+2}]}\Phi_n\left(k\right)z^k,
    $$
the identity \eqref{vn} yields
    \begin{equation*}
    \begin{split}
    \left(V_n\ast(D\lambda_p)_r\right)(z)
    &=\sum_{k=2^{n-1}}^{2^{n+1}-1}\psi\left(\frac{k}{2^{n-1}}\right)k\frac{w_{k-1,p}}{w_{k,p}}r^{k}z^k
    =\sum_{k=2^{n-1}}^{2^{n+1}-1}\psi\left(\frac{k}{2^{n-1}}\right)\Phi_n(k)z^k\\
    &=\left(W_1^{\Phi_n}\ast V_n\right)(z),\quad n\in\N\setminus\{1\}.
    \end{split}
    \end{equation*}
This together with Theorem~\ref{th:cesaro}, \eqref{phin} and \eqref{propervn} implies
    \begin{equation*}
    \begin{split}
    \|V_n\ast(D\lambda)_r\|_{H^1}
    &=\|W_1^{\Phi_n}\ast V_n\|_{H^1}
    \lesssim A_{\Phi_n,2}\|V_n\|_{H^1}
    \lesssim  2^{n} r^{2^{n-1}}\|V_n\|_{H^1}\\
    &\lesssim 2^{n} r^{2^{n-1}},\quad r\in \left[\frac12,1\right),\quad n\in\N\setminus\{1\},
    \end{split}
    \end{equation*}
which combined with \eqref{fbnorm} gives
    \begin{equation}\label{endstep1new1}
    M_1(r,D\lambda_p)
    \lesssim\sum_{n=2}^\infty2^{ n}r^{2^{n-1}}
    \lesssim\frac{1}{(1-r)}, \quad  r\in\left[\frac12,1\right).
    \end{equation}
This implies \eqref{eq:M_1}, and finishes the proof.
\end{proof}

For two Banach spaces $X,Y\subset\H(\D)$, let $[X,Y]=\{g\in\H(\D): f*g\in Y \textrm{ for every }f\in X\}$ denote the space of coefficient multipliers from $X$ to $Y$. The dual space of the Banach space $\HL_p$, with $1<p<\infty$, with respect to the $H^2$-pairing is given in the next lemma. This result is needed in the proof of Theorem~\ref{Theorem:Hardy-p<2}.

\begin{lemma}\label{le:dualHLp}
Let $1<p<\infty$. Then $(\HL_p)^\star\simeq\HL_{p'}$ via the $H^2$-pairing with equivalence of norms.
\end{lemma}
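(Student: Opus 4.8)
The plan is to exploit that $\HL_p$ is, by definition, merely a weighted sequence space, so that the whole lemma reduces to the classical duality $(\ell^p)^\star\simeq\ell^{p'}$ after keeping track of the weights. Concretely, the map $\Lambda\colon\HL_p\to\ell^p$ given by $\Lambda(f)=\{\widehat{f}(n)(n+1)^{\frac{p-2}{p}}\}_{n}$ is an isometric isomorphism onto $\ell^p$, since $\|\Lambda(f)\|_{\ell^p}^p=\sum_{n}|\widehat{f}(n)|^p(n+1)^{p-2}=\|f\|_{\HL_p}^p$, and any $\{\alpha_n\}\in\ell^p$ is attained by $f$ with $\widehat{f}(n)=\alpha_n(n+1)^{-\frac{p-2}{p}}$. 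The decisive piece of bookkeeping throughout is the arithmetic identity $-\frac{p-2}{p}\,p'=p'-2$, equivalently $\frac{p'}{p}=p'-1$, which turns the weight dual to $(n+1)^{p-2}$ back into the weight $(n+1)^{p'-2}$ of $\HL_{p'}$.

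First I would establish sufficiency: every $g\in\HL_{p'}$ induces a bounded functional on $\HL_p$ through the $H^2$-pairing. Writing $|\widehat{f}(n)||\widehat{g}(n)|=\bigl(|\widehat{f}(n)|(n+1)^{\frac{p-2}{p}}\bigr)\bigl(|\widehat{g}(n)|(n+1)^{-\frac{p-2}{p}}\bigr)$ and applying H\"older's inequality with exponents $p,p'$, the identity above yields $\sum_{n}|\widehat{f}(n)||\widehat{g}(n)|\le\|f\|_{\HL_p}\|g\|_{\HL_{p'}}$. Absolute convergence of this coefficient series shows that the radial limit defining $\langle f,g\rangle_{H^2}$ exists and equals $\sum_{n}\widehat{f}(n)\overline{\widehat{g}(n)}$, whence $|\langle f,g\rangle_{H^2}|\le\|f\|_{\HL_p}\|g\|_{\HL_{p'}}$.

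For the necessity, let $L\in(\HL_p)^\star$. Then $L\circ\Lambda^{-1}\in(\ell^p)^\star$, so there is a unique $\{d_n\}\in\ell^{p'}$ with $\|\{d_n\}\|_{\ell^{p'}}=\|L\circ\Lambda^{-1}\|=\|L\|$ representing it. Setting $\widehat{g}(n)=(n+1)^{\frac{p-2}{p}}d_n$ defines the candidate symbol. Since $\{d_n\}$ is bounded, $|\widehat{g}(n)|\lesssim(n+1)^{1-2/p}$ has polynomial growth, so $g\in\H(\D)$; and the same exponent identity gives $\|g\|_{\HL_{p'}}^{p'}=\sum_{n}|d_n|^{p'}=\|L\|^{p'}$, so $g\in\HL_{p'}$. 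Tracing the identifications back through $\Lambda$ yields $L(f)=\sum_{n}\widehat{f}(n)\overline{\widehat{g}(n)}=\langle f,g\rangle_{H^2}$ for all $f\in\HL_p$, with uniqueness of $g$ inherited from that of $\{d_n\}$. Together with the sufficiency bound this gives the claimed equivalence (indeed isometric correspondence) of norms.

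I do not expect a genuine obstacle here; the proof is essentially the weighted $\ell^p$ duality. The only points requiring care are the uniform verification of the weight arithmetic $-\frac{p-2}{p}\,p'=p'-2$ at each step, and the minor technicality that the $H^2$-pairing is defined via a radial limit, which must be shown to coincide with the absolutely convergent coefficient sum before H\"older's inequality is invoked.
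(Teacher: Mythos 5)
Your proof is correct, but it takes a different route from the paper's. The paper invokes Pavlovi\'c's result \cite[Proposition~1.3]{P}, which identifies $(\HL_p)^\star$ with the coefficient-multiplier space $[\HL_p,\mathcal{A}]$ (multipliers into the disc algebra) under the $H^2$-pairing, and then computes $[\HL_p,\mathcal{A}]=\HL_{p'}$: H\"older's inequality gives $\HL_{p'}\subset[\HL_p,\mathcal{A}]$, while the converse inclusion follows from the finiteness of $\sup_{z\in\overline{\D}}|(f*g)(z)|$ for every $f\in\HL_p$ together with the duality $(\ell^p)^\star\simeq\ell^{p'}$ (a converse-H\"older/uniform-boundedness argument applied to the weighted coefficient sequences). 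You bypass the multiplier framework entirely: you transport the Riesz representation of $(\ell^p)^\star$ through the isometric isomorphism $\Lambda$ and then verify by hand that the resulting functional is realized by the $H^2$-pairing, using the absolute convergence of the coefficient series to identify the radial limit with the coefficient sum --- precisely the point the paper outsources to the cited proposition. What each approach buys: yours is fully self-contained, replaces the uniform-boundedness step by a clean representation theorem, and yields the sharper isometric conclusion $\|L\|=\|g\|_{\HL_{p'}}$ rather than mere equivalence of norms; the paper's argument is shorter given the cited machinery and runs in exact parallel with its treatment of $(\HL_1)^\star\simeq\HL_\infty$ in Lemma~\ref{le:dualHL1}, where the same multiplier identification is reused with $(\ell^1)^\star\simeq\ell^\infty$. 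One cosmetic slip: in your necessity step you should set $\widehat{g}(n)=(n+1)^{\frac{p-2}{p}}\,\overline{d_n}$ (with the conjugate) so that $L(f)=\sum_n\widehat{f}(n)\overline{\widehat{g}(n)}$ matches the $H^2$-pairing; since $|\overline{d_n}|=|d_n|$, none of your norm computations change. The weight arithmetic $-\frac{p-2}{p}\,p'=p'-2$ you flag is indeed the crux, and it is the same identity that underlies the splitting $\widehat{f}(n)(n+1)^{\frac{p-2}{p}}\cdot\widehat{g}(n)(n+1)^{\frac{p'-2}{p'}}$ in the paper's proof.
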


\begin{proof}
Let $\mathcal{A}$ denote the disc algebra. By \cite[Proposition~1.3]{P}, the dual of $\HL_p$ can be identified with $[\HL_p, \mathcal{A}]$ under the $H^2$-pairing. We claim that $[\HL_p,\mathcal{A}]=\HL_{p'}$ for each $1<p<\infty$. 

To see this, first observe that for all $g\in\HL_{p'}$ and $f\in\HL_p$, H\"{o}lder's inequality yields
	$$
	\sup_{z\in\overline{\D}}|(f*g)(z)|
	\le\sum_{n=0}^{\infty}|\widehat{f}(n)||\widehat{g}(n)|
	\le\|f\|_{\HL_p}\|g\|_{\HL_{p'}}<\infty,
	$$
which shows that $\HL_{p'}\subset[\HL_p, \mathcal{A}]$.

Conversely, if $g(z)=\sum_{n=0}^{\infty}\widehat{g}(n)z^n\in[\HL_p,\mathcal{A}]$, then
	$$\infty>\sup_{z\in\overline{\D}}\left|\sum_{n=0}^{\infty}\widehat{f}(n)\widehat{g}(n)z^n \right|
	\ge
	\left|\sum_{n=0}^{\infty}\widehat{f}(n)(n+1)^{\frac{p-2}{p}}\widehat{g}(n)(n+1)^{\frac{p'-2}{p'}}\right|,\quad f\in\HL_p.
	$$
Since $f\in\HL_p$, that is, $\{\widehat{f}(n)(n+1)^{\frac{p-2}{p}}\}\in\ell^p$, the duality $(\ell^p)^\star\simeq\ell^{p'}$ yields $\{\widehat{g}(n)(n+1)^{\frac{p'-2}{p'}}\}\in\ell^{p'}$. Thus $g\in\HL_{p'}$, and the proof is finished.
\end{proof}

Recall that the space $\HL_{\infty}$ consists of $f\in\H(\D)$ such that 
	$$
	\|f\|_{\HL_{\infty}}=\sup_{n\in\N\cup\{0\}}\left(|\widehat{f}(n)|(n+1)\right)<\infty.	
	$$
The last lemma of the section describes $(\HL_1)^\star$. It will be used in the proof of Theorem~\ref{Theorem:p=1}.

\begin{lemma}\label{le:dualHL1}
$(\HL_1)^\star\simeq\HL_{\infty}$ via the $H^2$-pairing with equivalence of norms.
\end{lemma}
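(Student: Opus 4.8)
The plan is to follow the strategy of Lemma~\ref{le:dualHLp}: first invoke \cite[Proposition~1.3]{P} to identify $(\HL_1)^\star$, under the $H^2$-pairing, with the coefficient multiplier space $[\HL_1,\A]$, where $\A$ is the disc algebra, and then prove the identity $[\HL_1,\A]=\HL_\infty$ with comparable norms. The decisive observation is that the weighting built into the $\HL_1$-norm reduces the problem to the elementary duality $(\ell^1)^\star\simeq\ell^\infty$: the factorization $\widehat f(n)\widehat g(n)=\big(\widehat f(n)(n+1)^{-1}\big)\big((n+1)\widehat g(n)\big)$ exhibits $\{\widehat f(n)(n+1)^{-1}\}$ as a generic element of $\ell^1$ as $f$ ranges over $\HL_1$, and $\{(n+1)\widehat g(n)\}$ as the candidate $\ell^\infty$-multiplier. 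Thus $f\mapsto\{\widehat f(n)(n+1)^{-1}\}$ is an isometry of $\HL_1$ onto $\ell^1$ and $g\mapsto\{(n+1)\widehat g(n)\}$ is an isometry of $\HL_\infty$ onto $\ell^\infty$, and the whole lemma is the classical sequence duality read through these two identifications.

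For the inclusion $\HL_\infty\subset[\HL_1,\A]$, I would fix $g\in\HL_\infty$ and $f\in\HL_1$ and estimate, by H\"older's inequality for the pair $(\ell^1,\ell^\infty)$,
\[
\sup_{z\in\overline{\D}}|(f*g)(z)|\le\sum_{n=0}^\infty|\widehat f(n)||\widehat g(n)|
=\sum_{n=0}^\infty\frac{|\widehat f(n)|}{n+1}\,(n+1)|\widehat g(n)|
\le\|f\|_{\HL_1}\|g\|_{\HL_\infty}.
\]
The absolute convergence of $\sum_n\widehat f(n)\widehat g(n)z^n$ on $\overline{\D}$ forces $f*g$ to be the uniform limit of its partial sums, hence $f*g\in\A$; this simultaneously shows $g\in[\HL_1,\A]$ and bounds the multiplier norm by $\|g\|_{\HL_\infty}$.

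For the reverse inclusion $[\HL_1,\A]\subset\HL_\infty$, the cleanest route avoids any delicate question about convergence of the full Taylor series at the boundary: I would test against the normalized monomials $f_N(z)=(N+1)z^N$, which satisfy $\|f_N\|_{\HL_1}=1$. Since $g\in[\HL_1,\A]$ induces, by the closed graph theorem, a bounded multiplier $f\mapsto f*g$ of norm $\|g\|_{[\HL_1,\A]}$, and since $(f_N*g)(z)=(N+1)\widehat g(N)z^N$, one reads off
\[
(N+1)|\widehat g(N)|=\|f_N*g\|_{H^\infty}\le\|g\|_{[\HL_1,\A]},\quad N\in\N\cup\{0\}.
\]
Taking the supremum over $N$ yields $g\in\HL_\infty$ with $\|g\|_{\HL_\infty}\le\|g\|_{[\HL_1,\A]}$, which combined with the previous paragraph gives $[\HL_1,\A]=\HL_\infty$ with comparable (indeed equal) norms.

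The step I expect to require the most care is the first one, namely checking that \cite[Proposition~1.3]{P} genuinely applies to $\HL_1$, i.e.\ that the identification of $(\HL_1)^\star$ with $[\HL_1,\A]$ through the $H^2$-pairing is legitimate; this hinges on the polynomials being dense in $\HL_1$ and on the $H^2$-pairing being the correct bracket. Once that identification is secured, the remaining combinatorics are routine, and the entire content of the lemma is precisely the duality $(\ell^1)^\star\simeq\ell^\infty$ transported through the two isometries described above.
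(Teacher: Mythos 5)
Your proposal is correct and follows essentially the same route as the paper: both identify $(\HL_1)^\star$ with $[\HL_1,\A]$ via \cite[Proposition~1.3]{P} and establish $[\HL_1,\A]=\HL_\infty$ by the $\ell^1$--$\ell^\infty$ duality, using the identical H\"older estimate for the inclusion $\HL_\infty\subset[\HL_1,\A]$. Your reverse inclusion, testing the normalized monomials $f_N(z)=(N+1)z^N$ after a closed-graph argument, is just a hands-on rendering of the paper's direct appeal to $(\ell^1)^\star\simeq\ell^\infty$, not a genuinely different method.
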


\begin{proof}
By \cite[Proposition~1.3]{P}, the dual of $\HL_1$ can be identified with $[\HL_1,\mathcal{A}]$ under the $H^2$-pairing. We claim that $[\HL_1,\mathcal{A}]=\HL_{\infty}$.

On one hand, 
	$$
	\sup_{z\in\overline{\D}}|(f*g)(z)|
	\le\sum_{n=0}^{\infty}\frac{|\widehat{f}(n)|}{n+1}\left(|\widehat{g}(n)|(n+1)\right)
	\le\|f\|_{\HL_1}\|g\|_{\HL_{\infty}},
	$$
and hence $\HL_{\infty}\subset[\HL_1,\mathcal{A}]$.

On the other hand, let $g(z)=\sum_{n=0}^{\infty}\widehat{g}(n)z^n$ be a member of $[\HL_1,\mathcal{A}]$. Then
	$$
	\infty>\sup_{z\in\overline{\D}}\left|\sum_{n=0}^{\infty}\widehat{f}(n)\widehat{g}(n)z^n\right|
	\ge\left|\sum_{n=0}^{\infty}\frac{\widehat{f}(n)}{n+1}\left(\widehat{g}(n)(n+1)\right)\right|,\quad f\in\HL_1.
	$$
Since $f\in\HL_1$ if and only if $\{\frac{\widehat{f}(n)}{n+1}\}_{n=0}^\infty\in\ell^1$, the duality $(\ell^1)^\star\simeq\ell^{\infty}$ yields $g\in\HL_{\infty}$. Thus $\HL_{\infty}=[\HL_1,\mathcal{A}]$.
\end{proof}

\section{Hardy, Hardy-Littlewood and Dirichlet-type spaces with $1<p<\infty$}\label{Sec:pili}

The main  aim of this section is to prove Theorem~\ref{Theorem:Hardy-p<2}. To do that some notation and auxiliary results are needed. For each $g\in\H(\D)$, with Maclaurin series expansion $g(z)=\sum_{k=0}^\infty \widehat{g}(k)z^k$, consider the dyadic polynomials defined by $\Delta_0 g (z)=g(0)$ and $\Delta_n g (z)=\sum_{k=2^{n}}^{2^{n+1}-1}\widehat{g}(k)z^k$ for all $n\in\N$ and $z\in\D$. Then, obviously, $g=\sum_{n=0}^\infty\Delta_n g$. Further, write $\Delta_0=1$ and $\Delta_n(z)= \sum_{k=2^{n}}^{2^{n+1}-1}z^k$ for all $n\in\N$ and $z\in\D$. Then \cite[Lemma~2.7]{CPPR} shows that
	\begin{equation}\label{Deltanorm}
	\|\Delta_n\|_{H^p}\asymp 2^{\frac{n}{p'}},\quad 1<p<\infty,\quad n\in \N\cup\{0\}.
	\end{equation}
For $a\in\D$, denote $f_a(z)=f(az)$ for all $z\in\D$. With these preparations we can state the first auxiliary result.

\begin{proposition}\label{pr:j1}
Let $1<q<\infty$ and $g\in \H(\D)$ such that $\sum_{k=0}^\infty |\widehat{g}(k)|<\infty$. Then there exists a constant $C=C(q)>0$ such that
	\begin{equation}
	\begin{split}\label{eq:j1}
	\left\| \Delta_j f\ast G^{H^2}_{g,z} \right\|_{H^q}
	&=\left\|\sum_{k=2^j}^{2^{j+1}-1}\widehat{f}(k)\sum_{n=0}^\infty\frac{(n+1)\overline{\widehat{g}(n+1)}\overline{z}^{n+k+1} }{n+k+1}\z^k	\right\|_{H^q}\\
	&\le C\left\| \Delta_j f_{\overline{z}}  \right\|_{H^q}  \sum_{n=0}^\infty \frac{(n+1) |\widehat{g}(n+1)| |z|^{n+1} }{n+ 2^{j-1}+1},\quad z\in\D,
	\end{split}
	\end{equation}
for all $f\in\H(\D)$ and $j\in\N$.
\end{proposition}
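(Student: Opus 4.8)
The plan is to split the argument into an algebraic identity and a single analytic estimate. First I would establish the displayed equality by computing $G^{H^2}_{g,z}$ explicitly. Using the reproducing kernel $K_\z^{H^2}(w)=(1-\overline\z w)^{-1}=\sum_{n=0}^\infty\overline\z^{\,n}w^n$ of $H^2$ together with $g'(\z)=\sum_{n=0}^\infty(n+1)\widehat g(n+1)\z^n$, term-by-term integration of $\int_0^z g'(\z)\overline{K_\z^{H^2}(w)}\,d\z$ gives
\[
\widehat{G^{H^2}_{g,z}}(k)=\sum_{n=0}^\infty\frac{(n+1)\overline{\widehat g(n+1)}\,\overline z^{\,n+k+1}}{n+k+1},
\]
and taking the Hadamard product with $\Delta_j f$ selects exactly the coefficients $k\in[2^j,2^{j+1})$, which is the first line of \eqref{eq:j1}. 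The hypothesis $\sum_k|\widehat g(k)|<\infty$ guarantees absolute convergence of all series involved, so every interchange of summation below is legitimate.

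Next I would factor $\overline z^{\,k}$ out of the inner sum, writing the function on the left-hand side as $\sum_{n=0}^\infty (n+1)\overline{\widehat g(n+1)}\,\overline z^{\,n+1}\,\Lambda_{n,j}(\z)$, where $\Lambda_{n,j}(\z)=\sum_{k=2^j}^{2^{j+1}-1}\frac{\widehat f(k)\,\overline z^{\,k}}{n+k+1}\z^k$. Applying the triangle inequality in $H^q$ and pulling out the scalar factors reduces the whole proposition to the single-frequency estimate
\[
\|\Lambda_{n,j}\|_{H^q}\le \frac{C}{\,n+2^{j-1}+1\,}\,\bigl\|\Delta_j f_{\overline z}\bigr\|_{H^q},
\]
uniformly in $n\in\N\cup\{0\}$ and $j\in\N$; summing this against $(n+1)|\widehat g(n+1)||z|^{n+1}$ then produces precisely the right-hand side of \eqref{eq:j1}.

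The heart of the matter is this uniform multiplier bound, which I would obtain from the smooth universal Ces\'aro basis of Theorem~\ref{th:cesaro}. Observe that $\Lambda_{n,j}$ arises from $\Delta_j f_{\overline z}(\z)=\sum_{k=2^j}^{2^{j+1}-1}\widehat f(k)\overline z^{\,k}\z^k$ by multiplying its $k$-th coefficient by $(n+k+1)^{-1}$. With $N=2^{j-1}$ the block $k\in[2^j,2^{j+1})$ corresponds to $s=k/N\in[2,4)$, so I would realize the normalized multiplier as $\Phi_{n,j}(k/N)=\frac{n+N+1}{n+k+1}$. Concretely, fix once and for all a $C^\infty$ cut-off $\eta$ with $\eta\equiv1$ on $[2,4]$ and $\supp\eta\subset[3/2,9/2]\subset(0,\infty)$, and set $\Phi_{n,j}(s)=\eta(s)\,\frac{a+1}{a+s}$ with $a=(n+1)/N>0$. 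By the definition \eqref{eq:polynomials} of $W_N^{\Phi_{n,j}}$ and the fact that $\eta\equiv1$ on the relevant block, one checks $W_N^{\Phi_{n,j}}\ast \Delta_j f_{\overline z}=(n+N+1)\,\Lambda_{n,j}$, whence Theorem~\ref{th:cesaro} gives $\|\Lambda_{n,j}\|_{H^q}\le C\,A_{\Phi_{n,j},m}(n+N+1)^{-1}\|\Delta_j f_{\overline z}\|_{H^q}$ for any fixed $m$ with $mq>1$ (e.g.\ $m=1$).

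The main obstacle, and the only point requiring real care, is verifying that $A_{\Phi_{n,j},m}$ is bounded by a constant independent of $n$ and $j$. This comes down to estimating the derivatives of $s\mapsto\frac{a+1}{a+s}$ on the fixed interval $[3/2,9/2]$ uniformly over all $a\in(0,\infty)$: since $\frac{d^m}{ds^m}\frac{a+1}{a+s}=(-1)^m m!\,\frac{a+1}{(a+s)^{m+1}}$ and $a+s\ge a+3/2$, each such derivative is dominated on $[3/2,9/2]$ by $\frac{a+1}{(a+3/2)^{m+1}}$, a quantity that is continuous on $[0,\infty)$, finite at $a=0$, and decays like $a^{-m}$ as $a\to\infty$, hence bounded. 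Combining this with the Leibniz rule and the fixed cut-off $\eta$ yields $A_{\Phi_{n,j},m}\le C(m)$ uniformly, which closes the argument.
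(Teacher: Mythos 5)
Your proof is correct, and it rests on the same key lemma as the paper --- the universal Ces\`aro basis estimate of Theorem~\ref{th:cesaro} --- but organizes the argument through a genuinely different decomposition. The paper aggregates the entire sum over $n$ into a single $z$-dependent multiplier $\Psi_{2^j,z}(s)=\sum_{n=0}^\infty\frac{(n+1)\overline{\widehat{g}(n+1)}\,\overline{z}^{\,n+1}}{n+1+2^js}$, verifies directly that $|\Psi_{2^j,z}(s)|+|\Psi_{2^j,z}'(s)|\lesssim\sum_{n=0}^\infty\frac{(n+1)|\widehat{g}(n+1)||z|^{n+1}}{n+1+2^{j-1}}$ for $s\ge\frac12$, extends $\Psi_{2^j,z}$ from $[1,2]$ to a compactly supported $C^\infty$-function $\Phi_{2^j,z}$ with $A_{\Phi_{2^j,z},1}$ controlled by that same sum, and then applies Theorem~\ref{th:cesaro} exactly once, so the sum over $n$ lives inside the constant $A_{\Phi_{2^j,z},1}$. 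You instead peel off each $n$ via Minkowski's inequality and reduce everything to the uniform single-frequency bound $\|\Lambda_{n,j}\|_{H^q}\lesssim (n+2^{j-1}+1)^{-1}\|\Delta_j f_{\overline{z}}\|_{H^q}$ for $\Lambda_{n,j}(\z)=\sum_{k=2^j}^{2^{j+1}-1}\frac{\widehat{f}(k)\overline{z}^{\,k}}{n+k+1}\z^k$, realized through the $z$-free family $\Phi_{n,j}(s)=\eta(s)\frac{a+1}{a+s}$ with $a=(n+1)/2^{j-1}$, whose $A_{\Phi_{n,j},1}$-constants you correctly check are bounded uniformly in $a\in(0,\infty)$ from the explicit derivative formula. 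The trade-offs: your route makes the uniformity of constants completely elementary and keeps the multipliers independent of $z$ (all the $z$-dependence sits in the scalar weights $(n+1)\overline{\widehat{g}(n+1)}\,\overline{z}^{\,n+1}$), whereas the paper's aggregated construction invokes Theorem~\ref{th:cesaro} only once and bypasses the infinite triangle inequality in $H^q$ --- a step your argument genuinely needs $q\ge1$ for, which is harmless here since the proposition assumes $1<q<\infty$ (this also legitimizes the choice $m=1$, $mq>1$, in both proofs). Your convergence bookkeeping is adequate: $\sum_k|\widehat{g}(k)|<\infty$ together with $|z|<1$ makes $\sum_n(n+1)|\widehat{g}(n+1)||z|^{n+1}$ absolutely convergent, so the norm series you sum converges and the interchange of summations in the identity on the first line of \eqref{eq:j1} is justified.
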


\begin{proof}
For each $j\in\N$ and $z\in\D$, let us consider the $C^\infty$-function
	$$
	\Psi_{2^j,z}(s)=\sum_{n=0}^\infty \frac{(n+1)\overline{\widehat{g}(n+1)}\overline{z}^{n+1} }{n+1+2^{j}s},\quad s>0.
	$$
Then
\begin{equation}\label{eq:j2}
|\Psi_{2^j,z}(s)|\le \sum_{n=0}^\infty \frac{(n+1)|\widehat{g}(n+1)z^{n+1}| }{n+1+2^{j-1}},\quad s\ge \frac{1}{2}.
\end{equation}
Further,
	$$
	(\Psi_{2^j,z})'(s)=-2^j\sum_{n=0}^\infty \frac{(n+1)\overline{\widehat{g}(n+1)}\bar{z}^{n+1} }{(n+1+2^{j}s)^2},\quad s>0,$$
and hence
	\begin{equation}
	\begin{split}\label{eq:j3}
	|(\Psi_{2^j,z})'(s)|
	&\le2^j\sum_{n=0}^\infty\frac{(n+1)|\widehat{g}(n+1)z^{n+1}|}{(n+1+2^{j-1})^2}\\
	&\le2\sum_{n=0}^\infty\frac{(n+1)|\widehat{g}(n+1)z^{n+1}|}{n+1+2^{j-1}},\quad s\ge\frac{1}{2}.
	\end{split}
	\end{equation}
Therefore, by using \eqref{eq:j2} and \eqref{eq:j3}, we can find a $C^\infty$-function $\Phi_{2^j,z}$ and an absolute constant $C>0$ such that $\supp\Phi_{2^j,z}\subset\left( \frac{1}{2},4\right)$, $\Phi_{2^j,z}(s)=\Psi_{2^j,z}(s)$ for all $s\in [1,2]$ and
	$$
  A_{\Phi_{2^j,z},1}
	=\max_{s\in\mathbb{R}}|\Phi_{2^j,s}(s)|
	+\max_{s\in\mathbb{R}}|\Phi_{2^j,z}'(s)|
	\le C\sum_{n=0}^\infty \frac{(n+1)|\widehat{g}(n+1)z^{n+1}|}{n+1+2^{j-1}}.
  $$
Hence
	\begin{equation*}
	\begin{split}
	\Delta_j f\ast G^{H^2}_{g,z}(\z)
	&=\sum_{k=2^j}^{2^{j+1}-1}\widehat{f}(k)\overline{z}^{k}
	\sum_{n=0}^\infty \frac{(n+1)\overline{\widehat{g}(n+1)}\bar{z}^{n+1} }{n+k+1} \z^k\\
	&=\sum_{k=2^j}^{2^{j+1}-1}\widehat{f}(k)\overline{z}^{k}\Phi_{2^j,z}\left(\frac{k}{2^j}\right)\z^k
	=\left(\Delta_jf_{\overline{z}}\ast W^{\Phi_{2^j,z}}_{2^j}\right)(\z),\quad j\in\N.
	\end{split}
	\end{equation*}
Using now Theorem~\ref{th:cesaro} we find a constant $C=C(q)>0$ such that
	\begin{equation*}
	\begin{split}
	\left\|\Delta_j f\ast G^{H^2}_{g,z}(\z) \right\|_{H^q}
	&=\left\| \Delta_j f_{\bar{z}}\ast W^{\Phi_{2^j,z}}_{2^j}\right\|_{H^q}
	\le CA_{\Phi_{2^j,z},1}\left\|\Delta_j f_{\overline{z}}\right\|_{H^q}\\
	&\le C\left\| \Delta_j f_{\overline{z}}\right\|_{H^q} \sum_{n=0}^\infty \frac{(n+1)|\widehat{g}(n+1)z^{n+1}|}{n+1+2^{j-1}}.
	\end{split}
	\end{equation*}
This finishes the proof.
\end{proof}

The next result gives a sufficient condition for $T_g:~H^p\rightarrow H^\infty$ to be bounded and establishes the operator norm estimate \eqref{poi} announced in the introduction.

\begin{theorem}\label{th:dpsuf}
Let $1<p<\infty$ and $g\in\H(\D)$ such that $$\sum_{k=0}^{\infty}(k+1)^{p'-2}\left(\sum_{n=0}^{\infty}\frac{(n+1)|\widehat{g}(n+1)|}{n+k+1}\right)^{p'}<\infty.$$ 
If $X_p\in\{H^p,D^p_{p-1},\HL_p\}$,  then $T_g:~X_p\rightarrow H^\infty$ is bounded and
	$$
	\|T_g\|_{ X_p \rightarrow H^\infty}^{p'}
  \lesssim
  \sum_{k=0}^{\infty}(k+1)^{p'-2} \left(\sum_{n=0}^{\infty}\frac{(n+1)|\widehat{g}(n+1)|}{n+k+1}\right)^{p'}.
	$$
\end{theorem}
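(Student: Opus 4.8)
The plan is to feed the reproducing-kernel dual testing criterion \cite[Theorem~2.2]{CPPR} into the three duality relations, and then use the inclusions \eqref{Hardy1}--\eqref{Hardy2} to reduce, in each range of $p$, to a single (the largest) of the three spaces. First note that the hypothesis already forces $\sum_{k=0}^\infty|\widehat{g}(k)|<\infty$: the $k=0$ term of the asserted sum equals $\big(\sum_{n=0}^\infty|\widehat{g}(n+1)|\big)^{p'}$, so its finiteness gives absolute summability of the coefficients. Hence $g\in H^\infty$, Proposition~\ref{pr:j1} applies, and the kernel $G^{H^2}_{g,z}$ (the function $G^{H(\beta)}_{g,z}$ of \cite[Theorem~2.2]{CPPR} for $\beta_n\equiv1$) is well defined with
\[
\widehat{G^{H^2}_{g,z}}(k)=\sum_{n=0}^\infty\frac{(n+1)\overline{\widehat{g}(n+1)}\,\overline{z}^{n+k+1}}{n+k+1},\qquad\text{so}\qquad\big|\widehat{G^{H^2}_{g,z}}(k)\big|\le\sum_{n=0}^\infty\frac{(n+1)|\widehat{g}(n+1)|}{n+k+1}
\]
uniformly in $z\in\D$. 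Denote by $\mathcal{S}$ the sum on the right-hand side of the asserted estimate. Writing $Y_{p'}$ for the dual of $X_p$, the identifications $(H^p)^\star\simeq H^{p'}$, $(D^p_{p-1})^\star\simeq D^{p'}_{p'-1}$ (Lemma~\ref{le:dualdp}) and $(\HL_p)^\star\simeq\HL_{p'}$ (Lemma~\ref{le:dualHLp}), all via the $H^2$-pairing, together with \cite[Theorem~2.2]{CPPR} give $\|T_g\|_{X_p\to H^\infty}\asymp\sup_{z\in\D}\|G^{H^2}_{g,z}\|_{Y_{p'}}$. Since boundedness of $T_g$ on a space forces boundedness, with comparable norm, on any continuously embedded subspace, \eqref{Hardy1}--\eqref{Hardy2} reduce the whole statement to estimating $\sup_z\|G^{H^2}_{g,z}\|_{\HL_{p'}}$ when $1<p\le2$ and $\sup_z\|G^{H^2}_{g,z}\|_{D^{p'}_{p'-1}}$ when $2\le p<\infty$.

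The case $1<p\le2$ (so $p'\ge2$) is immediate. Since $\|h\|^{p'}_{\HL_{p'}}=\sum_{k=0}^\infty|\widehat{h}(k)|^{p'}(k+1)^{p'-2}$, the coefficient bound above yields $\|G^{H^2}_{g,z}\|^{p'}_{\HL_{p'}}\le\mathcal{S}$ for every $z\in\D$, which is exactly the desired estimate.

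The case $2\le p<\infty$, where $q:=p'\le2$, is the substantive one. I would use the standard dyadic description $\|h\|^{q}_{D^{q}_{q-1}}\asymp|\widehat{h}(0)|^{q}+\sum_{j\ge1}\|\Delta_j h\|_{H^{q}}^{q}$, which follows from \eqref{Deltanorm}, Theorem~\ref{th:cesaro} and the usual reduction of the area integral $\int_\D|h'|^q(1-|z|)^{q-1}\,dA$ to a sum over dyadic annuli. Applying Proposition~\ref{pr:j1} to $f=\Delta_j$ itself gives $\Delta_j\ast G^{H^2}_{g,z}=\Delta_j G^{H^2}_{g,z}$, and combining it with $\|(\Delta_j)_{\overline{z}}\|_{H^{q}}\le\|\Delta_j\|_{H^{q}}\asymp2^{j/p}$ from \eqref{Deltanorm} (note $(p')'=p$) produces
\[
\|\Delta_j G^{H^2}_{g,z}\|_{H^{q}}^{q}\lesssim 2^{jq/p}\Big(\sum_{n=0}^\infty\frac{(n+1)|\widehat{g}(n+1)|}{n+2^{j-1}+1}\Big)^{q}=2^{j(q-1)}\Big(\sum_{n=0}^\infty\frac{(n+1)|\widehat{g}(n+1)|}{n+2^{j-1}+1}\Big)^{q},
\]
using $q/p=q-1$ and $|z|\le1$. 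Finally I would reorganize $\mathcal{S}$ into dyadic blocks in $k$: on $2^{j-1}\le k<2^{j}$ one has $(k+1)^{q-2}\asymp2^{j(q-2)}$, there are $\asymp2^{j}$ indices, and $\sum_n\frac{(n+1)|\widehat{g}(n+1)|}{n+k+1}$ is comparable to its value at $k\asymp2^{j}$; hence the $j$-th block of $\mathcal{S}$ is comparable to $2^{j(q-1)}\big(\sum_n\frac{(n+1)|\widehat{g}(n+1)|}{n+2^{j-1}+1}\big)^{q}$. Summing the block estimates and absorbing the $j=0$ contribution into the $k=0$ term of $\mathcal{S}$ yields $\|G^{H^2}_{g,z}\|_{D^{q}_{q-1}}^{q}\lesssim\mathcal{S}$ uniformly in $z$, completing the proof.

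The main obstacle is the bookkeeping in the Dirichlet case: correctly invoking the dyadic characterization of the norm of $D^{p'}_{p'-1}$ and, above all, verifying that the double sum $\mathcal{S}$ really decouples into the blocks $2^{j(p'-1)}(\cdots)^{p'}$ — in particular that the inner sum $\sum_n\frac{(n+1)|\widehat{g}(n+1)|}{n+k+1}$ stays comparable as $k$ runs through a dyadic interval. By contrast the $\HL_{p'}$ half of the argument, and hence the range $1<p\le2$, is essentially free once the coefficient formula for $G^{H^2}_{g,z}$ is written down.
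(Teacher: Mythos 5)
Your proposal is correct and follows essentially the paper's own proof: the dual testing criterion of \cite[Theorem~2.2]{CPPR} combined with Lemmas~\ref{le:dualdp} and~\ref{le:dualHLp}, the direct coefficient bound for $\sup_z\|G^{H^2}_{g,z}\|_{\HL_{p'}}$, and for the Dirichlet case the dyadic decomposition of the $D^{p'}_{p'-1}$-norm (for which the paper cites \cite[Theorem~2.1]{MatPav}, i.e.\ \eqref{eq:j5}) together with Proposition~\ref{pr:j1} and \eqref{Deltanorm}. The only difference is organizational — the paper proves the $\HL_p$ and $D^p_{p-1}$ cases directly for all $1<p<\infty$ and then deduces $H^p$ from \eqref{Hardy1}--\eqref{Hardy2}, whereas you use the inclusions to reduce to one space per range of $p$ — and your explicit verifications (that the inner sum is comparable across dyadic blocks in $k$, which justifies the paper's last step in \eqref{eq:TgDp}, and that the $k=0$ term of the hypothesis already forces $\sum_k|\widehat{g}(k)|<\infty$ so that Proposition~\ref{pr:j1} applies) are details the paper leaves implicit.
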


\begin{proof}
We begin with the case $X_p=D^p_{p-1}$. By Lemma~\ref{le:dualdp} and \cite[Theorem~2.2]{CPPR}, $T_g:~D^p_{p-1}\rightarrow H^\infty$ is bounded if and only if $\sup_{z\in\D}\|G^{H^2}_{g,z}\|_{ D^{p'}_{p'-1}}<\infty$, and moreover,
	\begin{equation}\label{eq:j4}
	\|T_g\|_{D^p_{p-1}\rightarrow H^\infty}^{p'}
	\asymp\sup_{z\in\D}\|G^{H^2}_{g,z}\|^{p'}_{D^{p'}_{p'-1}}.
  \end{equation}
Further, for each $1<q<\infty$, \cite[Theorem~2.1]{MatPav} yields
	\begin{equation}\label{eq:j5}
	\|F\|^q_{D^q_{q-1}}\asymp\sum_{j=0}^\infty\| \Delta_j\ast F\|^q_{H^q},\quad F\in\H(\D).
	\end{equation}
Therefore, by combining \eqref{eq:j4}, \eqref{eq:j5}, Proposition~\ref{pr:j1} and \eqref{Deltanorm}, we deduce
	\begin{equation}
	\begin{split}\label{eq:TgDp}
	\|T_g\|_{D^p_{p-1}\rightarrow H^\infty}^{p'}
	&\asymp\sup_{z\in\D}\|G^{H^2}_{g,z}\|^{p'}_{D^{p'}_{p'-1}}
	\asymp\sup_{z\in\D}\sum_{j=0}^\infty \|\Delta_j \ast  G^{H^2}_{g,z}\|_{H^{p'}}^{p'}\\
	&\lesssim\sup_{z\in\D}\sum_{j=0}^\infty
	\|\Delta_j\|_{H^{p'}}^{p'}\left(\sum_{n=0}^\infty\frac{(n+1)|\widehat{g}(n+1)| |z|^{n+1} }{n+ 2^{j-1}+1}\right)^{p'}\\
	&\lesssim\sum_{j=0}^\infty 2^{j(p'-1)} \left( \sum_{n=0}^\infty \frac{(n+1) |\widehat{g}(n+1)|  }{n+ 2^{j-1}+1}\right)^{p'}
	\\
	&\lesssim \sum_{k=0}^{\infty}(k+1)^{p'-2} \left( \sum_{n=0}^\infty \frac{(n+1) |\widehat{g}(n+1)|  }{n+ k+1}  \right)^{p'}.
	\end{split}
	\end{equation}
Thus the assertion is proved for $X_p=D^p_{p-1}$.

Next we deal with the case $X_p=\HL_p$. By Lemma~\ref{le:dualHLp} and \cite[Theorem~1.1]{CPPR}, $T_g:\HL_p\rightarrow H^\infty$ is bounded if and only if $\sup_{z\in\D}\|G^{H^2}_{g,z}\|_{\HL_{p'}}<\infty$, and moreover,
	\begin{equation}\label{eq:j4+}
	\|T_g\|_{\HL_p\rightarrow H^\infty}^{p'}
	\asymp\sup_{z\in\D}\|G^{H^2}_{g,z}\|^{p'}_{\HL_{p'}}.
  \end{equation}
But
	\begin{equation*}
	\begin{split}
	\|G^{H^2}_{g,z}\|^{p'}_{\HL_{p'}}
	&=\sum_{k=0}^{\infty}(k+1)^{p'-2}\left|\sum_{n=0}^\infty\frac{(n+1)\overline{\widehat{g}(n+1)}\bar{z}^{n+k+1} }{n+k+1}\right|^{p'}\\
  &\le \sum_{k=0}^{\infty}(k+1)^{p'-2} \left( \sum_{n=0}^\infty \frac{(n+1) |\widehat{g}(n+1)|  }{n+ k+1}  \right)^{p'},
	\end{split}
	\end{equation*}
and thus $T_g:\HL_p\rightarrow H^\infty$ is bounded and
	$$
	\|T_g\|_{\HL_{p}\rightarrow H^\infty}^{p'}\le\sum_{k=0}^{\infty}
   (k+1)^{p'-2} \left( \sum_{n=0}^\infty \frac{(n+1) |\widehat{g}(n+1)|  }{n+ k+1}  \right)^{p'}.
	$$
Bearing in mind \eqref{Hardy1} and \eqref{Hardy2}, the remaining case $X_p=H^p$ follows from \cite[Theorem~1.1]{CPPR}, the well-known identification $(H^p)^\star\simeq H^{p'}$ via the $H^2$-pairing and the two cases already proven.
\end{proof}

Despite the inclusions in \eqref{Hardy1} and \eqref{Hardy2} are strict unless $p=2$, if one restricts to the class of power series with non-negative decreasing coefficients, then the following statements hold by \cite{HD}, \cite{P2} and \cite[Chapter~XII, Lemma~6.6]{Z}.

\begin{letterlemma}\label{Eq1}
Let $1\le p<\infty$, then there exist constants $C_1=C_1(p)>0$, $C_2=C_2(p)>0$ and $C_3=C_3(p)>0$ such that
	$$ 
	\|f\|^p_{H^p}
	\le C_1\|f\|^p_{D^p_{p-1}}
	\le C_2 \|f\|^p_{\HL_p}
	\le C_3 \|f\|^p_{H^p},
	$$
for all $f\in\H(\D)$ such that its Maclaurin coefficients $\{\widehat{f}(n)\}_{n=0}^\infty$ form a sequence of non-negative numbers decreasing to zero. In particular, 
	$$
	f\in H^p\quad\Longleftrightarrow\quad 
	f\in D^p_{p-1}\quad\Longleftrightarrow\quad 
	f\in\HL_p
	$$
for every such $f$.
\end{letterlemma}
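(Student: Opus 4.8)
The plan is to observe that the displayed four-term chain is cyclic, so it suffices to show that each of the three $p$-th powers is comparable to the single coefficient quantity
$$
\Lambda(f)=\sum_{n=0}^\infty a_n^p(n+1)^{p-2}=\|f\|_{\HL_p}^p ,
$$
where $a_n=\widehat f(n)$. Once $\|f\|_{H^p}^p\asymp\Lambda(f)\asymp\|f\|_{D^p_{p-1}}^p$ is established, each of the three stated inequalities holds with some admissible constant and the equivalences of membership are immediate. Thus the whole lemma reduces to two comparisons: $H^p$ versus $\Lambda$, and $D^p_{p-1}$ versus $\Lambda$.

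The comparison $\|f\|_{H^p}^p\asymp\Lambda(f)$ for a power series with non-negative coefficients decreasing to zero is the classical Hardy--Littlewood theorem on integral means of such series, which I would simply invoke, citing \cite[Chapter~XII]{Z} together with \cite{HD}. (A quick check on the power profiles $a_n=(n+1)^{-\gamma}$, for which $f'(z)\asymp(1-z)^{\gamma-2}$ near $z=1$, confirms that both sides are finite precisely when $\gamma>1/p'$, which makes the target plausible.)

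For the Dirichlet-type comparison I would first pass, for $1<p<\infty$, to the Littlewood--Paley description \eqref{eq:j5} of \cite{MatPav}, namely $\|f\|_{D^p_{p-1}}^p\asymp\sum_{j\ge0}\|\Delta_j f\|_{H^p}^p$, and note that, since $(k+1)^{p-2}\asymp 2^{j(p-2)}$ on the dyadic block $I_j=\{2^j,\dots,2^{j+1}-1\}$, one has $\Lambda(f)\asymp\sum_{j\ge0}2^{j(p-2)}\sum_{k\in I_j}a_k^p$. The matter then reduces to the block estimate
$$
\|\Delta_j f\|_{H^p}^p\asymp 2^{j(p-2)}\sum_{k\in I_j}a_k^p ,
$$
whose crucial feature is that on each block the coefficients $\{a_k\}_{k\in I_j}$ are \emph{monotone}. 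Since the flat block satisfies $\|\Delta_j\|_{H^p}\asymp 2^{j/p'}$ by \eqref{Deltanorm}, the natural device is the smooth-multiplier machinery of Theorem~\ref{th:cesaro}: one extends the monotone profile $k\mapsto a_k$ on $I_j$ to a $C^\infty$-function of suitably controlled $A_{\Phi,m}$-norm and compares $\Delta_j f=W^{\Phi}\ast\Delta_j$ with the flat block, exactly as in the proofs of Proposition~\ref{pr:j1} and Lemma~\ref{lemma:dual-Dirichlet-2}. The case $p=1$ lies outside \eqref{eq:j5} and would be handled directly from $\|f\|_{D^1_0}\asymp\int_0^1 M_1(r,f')\,dr+|a_0|$, again through the Hardy--Littlewood integral-means estimate applied to $f'=\sum_{n}(n+1)a_{n+1}z^n$.

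The main obstacle is precisely this block (equivalently, integral-means) estimate for $D^p_{p-1}$: although $\{a_k\}$ is monotone, the coefficients $(n+1)a_{n+1}$ of $f'$ are only non-negative and not monotone, so the one-variable Hardy--Littlewood theorem cannot be applied to $f'$ verbatim. Moreover a crude triangle-inequality (Abel summation) bound on $\|\Delta_j f\|_{H^p}$ loses a full power of $2^j$ and is therefore inadmissible, so the local monotonicity must be exploited sharply; this is where I would lean on, or reproduce quantitatively, the estimates of \cite{HD} and \cite{P2}. As a shortcut I would use \eqref{Hardy1}--\eqref{Hardy2} to note that only one of the two inequalities in $\|f\|_{D^p_{p-1}}^p\asymp\Lambda(f)$ needs monotonicity in each range, namely $\|f\|_{D^p_{p-1}}\lesssim\Lambda(f)^{1/p}$ when $1\le p\le2$ and $\Lambda(f)^{1/p}\lesssim\|f\|_{D^p_{p-1}}$ when $2\le p<\infty$; this halves the work but does not remove the essential difficulty.
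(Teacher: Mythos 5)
Your overall architecture is fine, but the paper itself offers no argument at all for this lemma: it is stated as known, with the references \cite{HD}, \cite{P2} and \cite[Chapter~XII, Lemma~6.6]{Z} doing all the work. So where you simply invoke those sources (the $H^p$ versus $\HL_p$ comparison, and your fallback for the Dirichlet side) you coincide with the paper. The genuine gap is in the self-contained block argument you propose for $D^p_{p-1}$. The two-sided block estimate $\|\Delta_j f\|_{H^p}^p\asymp 2^{j(p-2)}\sum_{k\in I_j}a_k^p$ is \emph{false}, even for globally decreasing sequences: take $a_k=1$ for $k\le 2^j$ and $a_k=0$ thereafter, so that $\Delta_j f(z)=z^{2^j}$; then $\|\Delta_j f\|_{H^p}^p=1$ while $2^{j(p-2)}\sum_{k\in I_j}a_k^p=2^{j(p-2)}$, and these are incomparable for every $p\ne2$ (one direction fails for $p<2$, the other for $p>2$). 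The proposed repair via Theorem~\ref{th:cesaro} cannot work either: a monotone profile on $I_j$ may drop by a fixed amount between consecutive integers, and then any $C^\infty$ extension $\Phi$ with $\Phi(k/2^j)=a_k$ has $\max|\Phi'|\gtrsim 2^j$, so $A_{\Phi,m}$ is not uniformly bounded and the multiplier theorem gives nothing. Also, your claim that Abel summation is inadmissibly lossy is mistaken: performed at the level of $H^p$ norms of the partial flat blocks $D_{j,m}(z)=\sum_{k=2^j}^m z^k$, for which $\|D_{j,m}\|_{H^p}\lesssim 2^{j/p'}$ when $1<p<\infty$, it yields the sharp one-sided bound $\|\Delta_jf\|_{H^p}\lesssim a_{2^j}\,2^{j/p'}$ by telescoping.

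These one-sided bounds are exactly what rescue your (correct and useful) observation that only one inequality per range needs monotonicity. For $1<p\le2$ the Abel bound just stated, together with the global monotonicity shift $2^{j(p-1)}a_{2^j}^p\lesssim\sum_{k\in I_{j-1}}(k+1)^{p-2}a_k^p$, gives $\|f\|_{D^p_{p-1}}^p\lesssim\|f\|_{\HL_p}^p$ after summing over $j$ and using \eqref{eq:j5}. For $2\le p<\infty$, pair $\Delta_jf$ with the flat block: $\sum_{k\in I_j}a_k\le\|\Delta_jf\|_{H^p}\|\Delta_j\|_{H^{p'}}$ and \eqref{Deltanorm} give $\|\Delta_jf\|_{H^p}\gtrsim 2^{j/p'}a_{2^{j+1}-1}$, whence $\sum_{k\in I_{j+1}}(k+1)^{p-2}a_k^p\lesssim\|\Delta_jf\|_{H^p}^p$, which sums to $\|f\|_{\HL_p}^p\lesssim\|f\|_{D^p_{p-1}}^p$. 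Note, however, that the endpoint $p=1$ is not covered by this repair: $\|D_{j,m}\|_{H^1}\asymp\log(m-2^j+2)$ produces a logarithmic loss ($\|\Delta_jf\|_{H^1}\lesssim (j+1)a_{2^j}$, while $\|f\|_{\HL_1}\asymp\sum_j a_{2^j}$), so for $p=1$ a genuinely different argument is required — precisely the Hardy--Littlewood $H^1$ result for decreasing coefficients — and there your deferral to \cite{HD} and \cite{P2} is, in effect, what the paper itself does.
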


We are now ready to prove Theorem~\ref{Theorem:Hardy-p<2}.

\medskip

\noindent{\emph {Proof of}} Theorem~\ref{Theorem:Hardy-p<2}. Assume first that $T_g:~X_p\rightarrow H^\infty$ is bounded. Then $g\in H^\infty$, and hence
	\begin{equation}\label{eq:j10}
	\sum_{n=0}^\infty |\widehat{g}(n)|=\sum_{n=0}^\infty\widehat{g}(n)<\infty.
	\end{equation}
Lemmas~\ref{le:dualdp} and~\ref{le:dualHLp} together with the well-known identification of $(H^p)^\star$ as $H^{p'}$ via the $H^2$-pairing imply $(X_p)^\star\simeq X_{p'}$. Therefore \cite[Theorem~1.1]{CPPR} yields
	\begin{equation}\label{q1}
	\begin{split}
	\|T_g\|_{X_p\rightarrow H^\infty}^{p'}
	&\asymp\sup_{z\in\D}\|G^{H^2}_{g,z}\|_{X_{p'}}^{p'}
     \gtrsim \sup_{x\in (0,1)}\|G^{H^2}_{g,x}\|_{X_{p'}}^{p'}.
	\end{split}
	\end{equation}
Since $G^{H^2}_{g,x}(\z)=\sum_{k=0}^\infty \left(\sum_{n=0}^\infty\frac{(n+1)\widehat{g}(n+1)x^{n+k+1}}{n+k+1} \right)\z^k$, for each $x\in(0,1)$, the Maclaurin coefficients
	$$
	\widehat{G^{H^2}_{g,x}}(k)=\sum_{n=0}^\infty \frac{(n+1)\widehat{g}(n+1)x^{n+k+1}}{n+k+1}, \quad k\in\N\cup\{0\},
	$$
form a sequence of non-negative and decreasing numbers. Therefore \eqref{q1}, Lemma~\ref{Eq1} and \eqref{eq:j10} imply
	\begin{equation*}
	\begin{split}
	\|T_g\|_{X_p\rightarrow H^\infty}^{p'}&\gtrsim
    \sup_{x\in (0,1)}\|G^{H^2}_{g,x}\|_{\HL_{p'}}^{p'}
	\asymp\sup_{x\in(0,1)}\sum_{k=0}^{\infty}(k+1)^{p'-2}\left(\sum_{n=0}^{\infty}\frac{(n+1)\widehat{g}(n+1)}{n+k+1}x^{n+k+1}\right)^{p'}\\
	&\asymp\sum_{k=0}^{\infty}(k+1)^{p'-2}\left(\sum_{n=0}^{\infty}\frac{(n+1)\widehat{g}(n+1)}{n+k+1}\right)^{p'}.
	\end{split}
	\end{equation*}
Thus \eqref{pipeli} holds.

Conversely, if \eqref{pipeli} is satisfied, then $T_g:~X_p\rightarrow H^\infty$ is bounded and
	$$
	\|T_g\|_{X_p\rightarrow H^\infty}^{p'}
	\lesssim \sum_{k=0}^{\infty}(k+1)^{p'-2}\left(\sum_{n=0}^{\infty}\frac{(n+1)\widehat{g}(n+1)}{n+k+1}\right)^{p'}
	$$
by Theorem~\ref{th:dpsuf}. The norm estimate \eqref{eq:j11} follows from the above inequalities.

To complete the proof we still need to show that $T_g:~X_p\rightarrow H^\infty$ is in fact compact if \eqref{pipeli} is satisfied. To see this, let first $X_p=\HL_p$. Further, let $\{f_n\}$ such that $\sup_{n}\|f_n\|_{\HL_p}<\infty$
and $f_n\to0$ uniformly on compact subsets of $\D$ as $n\to\infty$.
For each $\e>0$ there exists $k_0=k_0(\e)\in\N$ such that
	\begin{equation*}\label{eq:j16}
	\sum_{k=k_0}^{\infty}(k+1)^{p'-2}\left(\sum_{n=0}^{\infty}\frac{(n+1)\widehat{g}(n+1)}{n+k+1}\right)^{p'}<\e^{p'}.
	\end{equation*}
Moreover, by the uniform convergence we may pick up an $n_0=n_0(\e)\in\N$ such that
	\begin{equation*}
	\sup_{n\ge n_0}\sum_{k=0}^{k_0-1} (k+1)^{p-2}|\widehat{f_n}(k)|^p<\e^{p}.
	\end{equation*}
Then, H\"older's inequality yields
	\begin{equation*}
	\begin{split}
	\|T_g(f_n)\|_{H^\infty}
	&=\sup_{z\in\D}|\langle f_n,G^{H^2}_{g,z}\rangle_{H^2}|
	\le\sum_{k=0}^\infty |\widehat{f_n}(k)|\left(\sum_{n=0}^{\infty}\frac{(n+1)\widehat{g}(n+1)}{n+k+1}\right)\\
	&=\sum_{k=0}^{k_0-1} |\widehat{f_n}(k)|\left(\sum_{n=0}^{\infty}\frac{(n+1)\widehat{g}(n+1)}{n+k+1}\right)
	+\sum_{k=k_0}^\infty |\widehat{f_n}(k)|\left(\sum_{n=0}^{\infty}\frac{(n+1)\widehat{g}(n+1)}{n+k+1}\right)\\
	&\le\left(\sum_{k=0}^{k_0-1}(k+1)^{p-2} |\widehat{f_n}(k)|^p \right)^{\frac1p}
 \left(\sum_{k=0}^{k_0-1}(k+1)^{p'-2}\left(\sum_{n=0}^{\infty}\frac{(n+1)\widehat{g}(n+1)}{n+k+1}\right)^{p'} \right)^{\frac1{p'}}\\
	&\quad+
 \left(\sum_{k=k_0}^{\infty}(k+1)^{p-2} |\widehat{f_n}(k)|^p  \right)^{\frac1p} \left(\sum_{k=k_0}^{\infty}(k+1)^{p'-2}\left(\sum_{n=0}^{\infty}\frac{(n+1)\widehat{g}(n+1)}{n+k+1}\right)^{p'} \right)^{\frac1{p'}}\\
	&\le\e\left(\left(\sum_{k=0}^{\infty}(k+1)^{p'-2}
	\left(\sum_{n=0}^{\infty}\frac{(n+1)\widehat{g}(n+1)}{n+k+1}\right)^{p'}\right)^{1/p'}+ \sup_{n}\|f_n\|_{\HL_p}\right)\\
	&\lesssim \e,\quad n\ge n_0,
	\end{split}
	\end{equation*}
and hence $\lim_{n\to \infty}\|T_g(f_n)\|_{H^\infty}=0$. Therefore $T_g:\HL_p\to H^\infty$ is compact by \cite[Lemma~3.6]{Tjani}.

Let now $X_p=D^p_{p-1}$. We first show that
	\begin{equation}\label{eq:j16}
	\lim_{R\rightarrow1^-}\sup_{z\in\D}\int_{\D\backslash\overline{\D(0,R)}}|(G^{H^2}_{g,z})'(w)|^{p'}(1-|w|)^{p'-1}\,dA(w)=0,
	\end{equation}
and then we use this fact to prove the compactness of $T_g:D^p_{p-1} \to H^\infty$.

If $2<p'<\infty$, then \eqref{Hardy2} and Fubini's theorem yield
	\begin{equation*}
	\begin{split}
	&\lim_{R\rightarrow1^-}\sup_{z\in\D}\int_{\D\backslash\overline{\D(0,R)}}|(G^{H^2}_{g,z})'(w)|^{p'}(1-|w|)^{p'-1}dA(w)\\
	&=\lim_{R\rightarrow1^-}\sup_{z\in\D}\int_R^1\int_0^{2\pi}\left|\sum_{k=0}^{\infty}(k+1)
	\sum_{n=0}^{\infty}\frac{(n+1)\widehat{g}(n+1)z^{n+k+2}}{n+k+2}r^ke^{ik\theta}\right|^{p'} d\theta (1-r)^{p'-1}r\,dr\\
	&=2\pi\lim_{R\rightarrow1^-}\sup_{z\in\D}\int_R^1 \| (G^{H^2}_{g,z})'_r\|^{p'}_{H^{p'}} (1-r)^{p'-1}r\,dr\\
	&\lesssim\lim_{R\rightarrow1^-}\int_R^1\sum_{k=0}^{\infty}(k+1)^{p'-2}\left(\sum_{n=0}^\infty\frac{(n+1)\widehat{g}(n+1)}{n+k+1}\right)^{p'}
	(k+1)^{p'}r^{kp'+1}(1-r)^{p'-1}dr\\
	&=\lim_{R\rightarrow1^-}\sum_{k=0}^{\infty}(k+1)^{p'-2}\left(\sum_{n=0}^\infty\frac{(n+1)\widehat{g}(n+1)}{n+k+1}\right)^{p'}
	(k+1)^{p'}\int_R^1r^{kp'+1}(1-r)^{p'-1}dr,
	\end{split}
	\end{equation*}
where
	$$
	(k+1)^{p'}\int_R^1r^{kp'+1}(1-r)^{p'-1}dr\leq(k+1)^{p'}\int_0^1r^{kp'+1}(1-r)^{p'-1}dr \asymp 1,\quad k\in\N\cup\{0\}.
	$$
The dominated convergence theorem now implies \eqref{eq:j16}.

If $1<p'\le2$, then Proposition~\ref{pr:j1} and an argument similar to that used in the proof of \eqref{eq:TgDp} allows us to find a constant $C=C(p)>0$ such that
	$$
	\| (G^{H^2}_{g,z})'_r\|^{p'}_{D^{p'}_{p'-1}}\le C\sum_{k=0}^{\infty}(k+1)^{p'-2}\left(\sum_{n=0}^\infty\frac{(n+1)\widehat{g}(n+1)}{n+k+1}\right)^{p'}
	(k+1)^{p'}r^{kp'},\quad 0\le r<1.
	$$
This together with \eqref{Hardy1} implies
  \begin{equation*}
	\begin{split}
	&\lim_{R\rightarrow1^-}\sup_{z\in\D}\int_{\D\backslash\overline{\D(0,R)}}|(G^{H^2}_{g,z})'(w)|^{p'}(1-|w|)^{p'-1}dA(w)\\
     & =2\pi\lim_{R\rightarrow1^-}\sup_{z\in\D}\int_R^1 \| (G^{H^2}_{g,z})'_r\|^{p'}_{H^{p'}} (1-r)^{p'-1}r\,dr
     \\ & \lesssim  \lim_{R\rightarrow1^-}\sup_{z\in\D}\int_R^1\| (G^{H^2}_{g,z})'_r\|^{p'}_{D^{p'}_{p'-1}}(1-r)^{p'-1}r\,dr
    \\ &\lesssim\lim_{R\rightarrow1^-}\int_R^1\sum_{k=0}^{\infty}(k+1)^{p'-2}\left(\sum_{n=0}^\infty\frac{(n+1)\widehat{g}(n+1)}{n+k+1}\right)^{p'}
	(k+1)^{p'}r^{kp'+1}(1-r)^{p'-1}dr=0.
	\end{split}
	\end{equation*}
Consequently, \eqref{eq:j16} holds for each $1<p<\infty$.

Let now $\{f_n\}$ such that $\sup_{n}\|f_n\|_{D^p_{p-1}}<\infty$
and $f_n\to0$ uniformly on compact subsets of $\D$. By \eqref{eq:j16}, for each $\varepsilon>0$, there exists $R=R(\varepsilon)\in(0,1)$  such that
	$$
	\sup_{z\in\D}\int_{\D\backslash\overline{\D(0,R)}}|(G^{H^2}_{g,z})'(w)|^{p'}(1-|w|)^{p'-1}dA(w)<\varepsilon^{p'}.
	$$
Further, by the uniform convergence we may choose $N=N(\varepsilon,R)\in\N$ such that $\max\{|f_n(0)|,|f_n'(\xi)|\}<\varepsilon$ for all $n\geq N$ and $\xi\in\overline\D(0,R)$. Therefore \cite[(2.4) and (4.4)]{CPPR} and H\"{o}lder's inequality yield
	\begin{equation*}
	\begin{split}
	\|T_g(f_n)\|_{H^\infty}
	&=\sup_{z\in\D}|\langle f_n,G^{H^2}_{g,z}\rangle_{H^2}|\\
	&\lesssim\sup_{z\in\D}\left|\int_{\D}f_n'(w)\overline{(G^{H^2}_{g,z})'(w)}\log\frac1{|w|}\,dA(w)\right|+|f_n(0)| \|g\|_{H^\infty}\\
	&\lesssim\sup_{z\in\D}\left(\left(\int_{\overline{\D(0,R)}}+\int_{\D\backslash\overline{\D(0,R)}}\right)
	|f_n'(w)||(G^{H^2}_{g,z})'(w)|(1-|w|)^{1-\frac1p+1-\frac1{p'}}dA(w)\right)\\
	&\quad+|f_n(0)| \|g\|_{H^\infty}\\
	&\lesssim\varepsilon\left(\|G^{H^2}_{g,z})\|^{p'}_{D^{p'}_{p'-1}}+ \sup_{n}\|f_n\|_{D^p_{p-1}}\right)\lesssim\e,\quad n\ge N.
	\end{split}
	\end{equation*}
Therefore $T_g:D^p_{p-1}\to H^\infty$ is compact by \cite[Lemma~3.6]{Tjani}.

Finally, let $X_p=H^p$. If $1<p<2$, then we may use the fact already proven that $T_g:\HL_p\to H^\infty$ is compact, and \eqref{Hardy1} to deduce the compactness of $T_g: H^p\to H^\infty$. In the case $2<p<\infty$ the same conclusion follows from  \eqref{Hardy2} and the compactness of $T_g:D^p_{p-1}\to H^\infty$. This finishes the proof of the theorem.
\hfill$\Box$

\section{Hardy, Hardy-Littlewood and Dirichlet-type spaces with $0<p\le1$}\label{Sec:2-3}

In this section we prove Theorems~\ref{Theorem:p=1} and \ref{Theorem:p=1'} in the said order. Since all the necessary auxiliary results are already stated in the previous sections, we can directly embark on the proofs.

\medskip

\noindent{\emph{Proof of}} Theorem~\ref{Theorem:p=1}.
(i). By \eqref{Hardy1} it suffices to show that $T(\HL_1,H^\infty)$ contains all polynomials. By Lemma~\ref{le:dualHL1} and \cite[Theorem~2.2]{CPPR}, we know that 
	$$
	\|T_g\|_{\HL_1\rightarrow H^{\infty}}\asymp\sup_{z\in\D}\|G^{H^2}_{g,z}\|_{\HL_{\infty}}. 
	$$
Since 
	$$
	\overline{G^{H^2}_{g,z}(w)}
	=\int_0^z\frac{g'(\z)}{1-\overline{w}\z}\,d\zeta
	=\sum_{k=0}^{\infty}\left(\sum_{n=0}^{\infty}(n+1)\widehat{g}(n+1)\frac{z^{n+k+1}}{n+k+1}\right)\overline{w}^k,
	$$
it is easy to show that 
	\begin{equation}\label{pillo}
	\sup_{z\in\D}\|G^{H^2}_{g,z}\|_{\HL_{\infty}}\lesssim
	\sup_{k\in\N\cup\{0\}}\left((k+1)\sum_{n=0}^{\infty}\frac{(n+1)|\widehat{g}(n+1)|}{n+k+1}\right).
	\end{equation}
If $g$ is a monomial, then $T_g:\HL_1\to H^\infty$ is bounded by \eqref{pillo}. It follows that $T(\HL_1,H^\infty)$ contains all polynomials.

(ii). Let $0<p<1$. Observe that
	$$
	\frac1{(1-\overline{w}\zeta)^{1+\frac1p}}=\sum_{k=0}^\infty c_p(k)(\overline{w}\zeta)^k,
	$$
where $c_p(k)\asymp(k+1)^\frac1p$ for all $k\in\N\cup\{0\}$. Hence
	$$
	\int_0^z\frac{g'(\zeta)}{(1-\overline{w}\zeta)^{1+\frac1p}}d\zeta
	=\sum_{n=0}^\infty\sum_{k=0}^\infty\frac{(n+1)\widehat{g}(n+1)c_p(k)}{n+k+1}\overline{w}^kz^{n+k+1}.
	$$
If $g$ is not a constant, then Lemma~\ref{lemma:dual-Dirichlet-2}, \cite[Theorem~2.2]{CPPR}, which can be applied to quasi-Banach spaces,  \eqref{Eq-alpha-bloch} and \eqref{Hardy1} yield
	\begin{equation*}
	\begin{split}
	\|T_g\|_{X_p\to H^\infty}
	&\gtrsim \|T_g\|_{D^p_{p-1}\to H^\infty}
	\asymp\sup_{z\in\D}\left(\left(\sup_{w\in\D}\left|\overline{\int_0^z\frac{g'(\zeta)\zeta}{(1-\overline{w}\zeta)^{2+\frac1p}}d\zeta}\right|(1-|w|)^2\right)+|g(z)-g(0)|\right)\\
	&\asymp\sup_{z\in\D}\left(\sup_{w\in\D}\left|\overline{\int_0^z\frac{g'(\zeta)}{(1-\overline{w}\zeta)^{1+\frac1p}}d\zeta}\right|(1-|w|)\right)\\
	&\gtrsim\sup_{z\in\D}\left|\int_0^z\frac{g'(\zeta)}{(1-\overline{z}\zeta)^{1+\frac1p}}d\zeta\right|(1-|z|)\\
	&=\sup_{z\in\D}\left|\sum_{n=0}^\infty\sum_{k=0}^\infty\frac{(n+1)\widehat{g}(n+1)c_p(k)}{n+k+1}|z|^{2k}z^{n+1}\right|(1-|z|)\\
	&\ge\sup_{0<r<1}\frac1{2\pi}\int_0^{2\pi}\left|\sum_{n=0}^\infty
	\left(\sum_{k=0}^\infty\frac{c_p(k)}{n+k+1}r^{2k+n+1}\right)(n+1)\widehat{g}(n+1)e^{i\theta(n+1)}\,d\theta\right|(1-r)\\
	&\gtrsim\sup_{0<r<1}\sum_{n=0}^\infty\frac{\sum_{k=0}^\infty\frac{c_p(k)}{n+k+1}r^{2k+n+1}(n+1)|\widehat{g}(n+1)|}{n+1}(1-r)\\
	&\gtrsim\limsup_{r\to1^-}\sum_{k=0}^\infty\frac{c_p(k)}{k+1}r^{2k}(1-r)
	\asymp\lim_{r\to1^-}\frac1{(1-r)^{\frac1p-1}}=\infty
	\end{split}
	\end{equation*}
because $0<p<1$. Therefore $T_g:X_p\rightarrow H^\infty$ is bounded if and only if $g$ is a constant.

(iii). By \eqref{Hardy1}, it suffices to consider the case of $X_1=D^1_0$, so assume that $T_g: D^1_0\rightarrow H^\infty$ is compact. Let $(H^{\infty})^\star$ denote the identification of the dual space of $H^{\infty}$ via the $A^2$-pairing. Then $T_g^*: (H^{\infty})^\star\rightarrow\B^2$ is compact by Lemma~\ref{lemma:dual-Dirichlet-2}. Let $K^{A^2}_{z}$ denote the reproducing kernel of the Hilbert space $A^2$, associated to the point $z\in\D$. Then $T_g^*(K^{A^2}_{z})=G^{A^2}_{g,z}$ for all $z\in\D$, and  
	\begin{equation*}
	\begin{split}
	\|K^{A^2}_{z}\|_{(H^{\infty})^\star}
	&=\sup_{\|f\|_{H^{\infty}}\le1}|\langle f, K^{A^2}_{z}\rangle|
	=\sup_{\|f\|_{H^{\infty}}\le1}\lim_{r\rightarrow1^-}\left|\sum_{n=0}^{\infty}\widehat{f}(n)\frac{z^n}{n+1}(n+1)r^n\right|\\
	&\le\sup_{\|f\|_{H^{\infty}}\le1}\|f\|_{H^{\infty}}
	\le1,\quad z\in\D.
	\end{split}
\end{equation*}
Therefore $\{G^{A^2}_{g,z}:z\in\D\}$ is relatively compact in $\B^2$. Hence, for given $\varepsilon>0$, there exist $z_1,\ldots,z_N\in\D$ such that for each $z\in\D$, we have $\|G^{A^2}_{g,z}-G^{A^2}_{g,z_j}\|_{\B^2}<\varepsilon$ for some $j=j(z)\in\{1,\ldots,N\}$. By using this and \eqref{Eq-alpha-bloch} we deduce
	\begin{equation*}
	\begin{split}
	&\sup_{a\in\D}\frac1{(1-|a|)^2}\int_{S(a)\setminus D(0,R)}|G^{A^2}_{g,z}(w)|^2(1-|w|)^2\,dA(w)\\
	&\quad\lesssim\|G^{A^2}_{g,z}-G^{A^2}_{g,z_j}\|_{H^\infty_1}^2+\sup_{a\in\D}\frac1{(1-|a|)^2}\int_{S(a)\setminus D(0,R)}|G^{A^2}_{g,z_j}(w)|^2(1-|w|)^2\,dA(w)\\
	&\quad \lesssim\e^2+\sup_{|a|\ge R}\frac1{(1-|a|)^2}\int_{S(a)}|G^{A^2}_{g,z_j}(w)|^2(1-|w|)^2\,dA(w).
	\end{split}
	\end{equation*}
Since $G^{A^2}_{g,z_j}\in\A\subset\B^2_0$ for each $j\in\{1,\ldots,N\}$, we obtain
	$$
	\lim_{R\rightarrow1^-}\sup_{a,z\in\D}\frac1{(1-|a|)^2}\int_{S(a)\setminus D(0,R)}|G^{A^2}_{g,z}(w)|^2(1-|w|)^2\,dA(w)=0,
	$$
which is equivalent to
	\begin{equation}\label{eq:compactdirichlet}
	\lim_{R\rightarrow1^-}\sup_{a,z\in\D}\int_{\D\setminus D(0,R)}|G^{A^2}_{g,z}(w)|^2(1-|\varphi_a(w)|^2)^2\,dA(w)=0
	\end{equation}
by the reasoning in the proof of \cite[Lemma 3.3]{G}, see \cite[Lemma 5.3]{PR} for further details. However, if $g$ is not a constant, then there exists an $N\in\N\cup\{0\}$ such that $\widehat{g}(N+1)\ne0$. Therefore, 
	\begin{equation*}
	\begin{split}
	&\sup_{a,z\in\D}\int_{\D\setminus D(0,R)}|G^{A^2}_{g,z}(w)|^2(1-|\varphi_a(w)|^2)^2\,dA(w)\\
	&\quad\ge\sup_{z\in\D}\int_{\D\setminus D(0,R)}|G^{A^2}_{g,z}(w)|^2(1-|\varphi_z(w)|^2)^2\,dA(w)\\
	&\quad=\sup_{z\in\D}(1-|z|)^2\left(\int_{\D\setminus D(0,R)}\left|\frac{G^{A^2}_{g,z}(w)}{(1-z\overline{w})^2}\right|^2
    (1-|w|)^2
    \,dA_2(w)\right)\\
	&\quad\asymp\sup_{z\in\D}\left((1-|z|)^2\int_{\D\setminus D(0,R)}
	\left|\left(\sum_{k=0}^{\infty}\left(\sum_{n=0}^{\infty}\frac{(n+1)\widehat{g}(n+1)(k+1)}{n+k+1}z^{n+k+1}\right)\overline{w}^k\right)
    \right.\right.\\
	&\qquad\quad\cdot\left.\left.
    \left(\sum_{j=0}^{\infty}(j+1)z^i\overline{w}^j\right)\right|^2
    (1-|w|)^2
    \,dA(w)\right)\\
	&\quad=\sup_{z\in\D}(1-|z|)^2
    \int_{\D\setminus D(0,R)}
	\left| \sum_{m=0}^{\infty} \left(\sum_{k=0}^{m}\left(\sum_{n=0}^{\infty}\frac{(n+1)\widehat{g}(n+1)(k+1)}{n+k+1}z^{n+k+1}\right)(m-k+1)z^{m-k}\right)
    \overline{w}^m\right|^2\\
	&\qquad\quad\cdot(1-|w|)^2\,dA(w)
    \\
	&\quad\asymp\sup_{z\in\D}(1-|z|)^2\Bigg(\sum_{m=0}^{\infty}\left|\sum_{k=0}^m\sum_{n=0}^{\infty}\frac{(n+1)\widehat{g}(n+1)(k+1)(m-k+1)}{n+k+1}z^{n+m+1}	\right|^2
    \int_R^1s^{2m+1}(1-s)^2\,ds\Bigg)\\
	&\quad\gtrsim\sup_{0<r<1}\Bigg((1-r)^2\Bigg(\sum_{m=0}^{\infty}r^{2m}\left(\int_R^1s^{2m+1}(1-s)^2\,ds\right)\\
	&\qquad\quad\cdot\int_0^{2\pi}\left|\sum_{n=0}^{\infty}(n+1)\widehat{g}(n+1)\left(\sum_{k=0}^{m}\frac{(k+1)(m-k+1)}{n+k+1}\right)r^{n+1}e^{i\theta(n+1)}\right|^2
    \Bigg)\,d\theta\\
	&\quad\gtrsim\sup_{0<r<1}|\widehat{g}(N+1)|^2(N+1)^2(1-r)^2r^{2N+2}\sum_{m=0}^{\infty}r^{2m}\left(\int_R^1s^{2m+1}(1-s)^2\,ds\right)\left(\sum_{k=0}^{m}(m-k+1)\right)^2\\
	&\quad\gtrsim R^{2N+1}(1-R)^2\int_R^1(1-s)^2\left(\sum_{m=0}^{\infty}(m+1)^4(Rs)^{2m+1}\right)\,ds\\
	&\quad\asymp R^{2N+1}(1-R)^2\int_R^1\frac{(1-s)^2}{(1-Rs)^5}ds\\
	&\quad\asymp R^{2N+1},\quad 0<R<1.
	\end{split}
	\end{equation*}
By letting $R\rightarrow1^-$ we obtain a contradiction with \eqref{eq:compactdirichlet}. Therefore $g$ must be a constant if $T_g: D^1_0\rightarrow H^\infty$ is compact. This finishes the proof of the theorem.
\hfill$\Box$

\bigskip

\noindent{\emph {Proof of}} Theorem~\ref{Theorem:p=1'}. By using the proof of Theorem~\ref{Theorem:p=1}(i) and standard arguments, we deduce 
	\begin{equation*}
	\begin{split}
	\|T_g\|_{D^1_0\to H^\infty}
	&\lesssim\|T_g\|_{H^1\to H^\infty}
	\lesssim\|T_g\|_{\HL_1\to H^\infty}
	\asymp\sup_{z\in\D}\|G^{H^2}_{g,z}\|_{\HL_{\infty}}\\
	&\asymp
	\sup_{k\in\N\cup\{0\}}\left((k+1)\sum_{n=0}^{\infty}\frac{(n+1)\widehat{g}(n+1)}{n+k+1}\right)
	\end{split}
	\end{equation*}
because $\widehat{g}(n)\geq0$ for all $n\in\mathbb{N}\cup\{0\}$ by the hypothesis. Thus $T_g:X_1\to H^\infty$ is bounded if \eqref{Eq:p=1} is satisfied.

Conversely, if $T_g:X_1\to H^\infty$ is bounded, then $T_g:D^1_0\to H^\infty$ is bounded by \eqref{Hardy1}. Therefore Lemma~\ref{lemma:dual-Dirichlet-2}, \cite[Theorem~2.2]{CPPR} and \eqref{Eq-alpha-bloch} yield
	\begin{equation*}
	\begin{split}
	\|T_g\|_{D^1_0\to H^\infty}
	&\asymp\sup_{z\in\D}\left(\left(\sup_{w\in\D}\left|\overline{\int_0^z\frac{g'(\zeta)\zeta}{(1-\overline{w}\zeta)^{3}}d\zeta}\right|(1-|w|)^2\right)+|g(z)-g(0)|\right)\\
	&\asymp\sup_{z\in\D}\left(\sup_{w\in\D}\left|\overline{\int_0^z\frac{g'(\zeta)}{(1-\overline{w}\zeta)^{2}}d\zeta}\right|(1-|w|)\right).
	\end{split}
	\end{equation*}
Now that $\widehat{g}(n)\geq0$ for all $n\in\mathbb{N}\cup\{0\}$ by the hypothesis, standard arguments yield
	\begin{equation}\label{mierda}
	\|T_g\|_{D^1_0\to H^\infty}
	\asymp\sup_{0\le s<1}\left(\left(\sum_{k=0}^{\infty}\sum_{n=0}^{\infty}\frac{(n+1)(k+1)\widehat{g}(n+1)}{(n+k+1)}s^{k}\right)(1-s)\right).
	\end{equation}
Since the coefficients
	$$
	\sum_{n=0}^{\infty}\frac{(n+1)(k+1)\widehat{g}(n+1)}{(n+k+1)}
	$$
are positive for all $k$, and increasing in $k$, we deduce
	\begin{equation*}
	\begin{split}
	\|T_g\|_{D^1_0\to H^\infty}
	&\gtrsim\sup_{K\in\N\cup\{0\}}\left(\left(\sum_{n=0}^{\infty}\frac{(n+1)(K+1)\widehat{g}(n+1)}{(n+K+1)}\right)\limsup_{s\to1^-}\sum_{k=K}^{\infty}s^{k}(1-s)\right)\\
	&=\sup_{K\in\N\cup\{0\}}\left((K+1)\sum_{n=0}^{\infty}\frac{(n+1)\widehat{g}(n+1)}{(n+K+1)}\right),
	\end{split}
	\end{equation*}
and thus \eqref{Eq:p=1} is satisfied. The norm estimate \eqref{eq:j11'} is an immediate consequence of the proof just established. 
This finishes the proof.
\hfill$\Box$

\section{$\BMOA$, Bloch space and $H^\infty_{\log}$}\label{Sec:Bloch}

This section is devoted to the proof of Theorem~\ref{Bounded}. Unlike the other main results, its proof does not require much tools, but can be carried out with relatively straightforward arguments.

\medskip

\noindent{\emph {Proof of}} Theorem~\ref{Bounded}.
The chain of inequalities \eqref{Eq:embedding} shows that (i)$\Rightarrow$(ii)$\Rightarrow$(iii) and
	\begin{equation}\label{Eq:1}
	\|T_g\|_{\BMOA\rightarrow H^\infty}\lesssim\|T_g\|_{X\rightarrow H^\infty}\lesssim \|T_g\|_{H_{\log}^\infty\rightarrow H^\infty}.
	\end{equation}
If (iii) is satisfied, then, by \cite[Theorem 2.5 (v)]{CPPR}, we have
	\begin{equation*}
	\begin{split}
	\|T_g\|_{\BMOA\rightarrow H^\infty}
	&\asymp\sup_{z\in\D}\|G^{H^2}_{g,z}\|_{H^1}
	=\sup_{z\in\D}\int_{0}^{2\pi}\left|\overline{\int_0^z\frac{g'(\z)}{1-\z e^{-i\theta}}\,d\zeta}\right|d\theta\\
	&=\sup_{z\in\D}\int_0^{2\pi}\left|\sum_{k=0}^\infty\sum_{n=0}^\infty\frac{(n+1)\overline{\widehat{g}(n+1)}\overline{z}^{n+k+1}}{n+k+1}e^{ik
\theta}\right|d\theta.
	\end{split}
	\end{equation*}
Since $\widehat{g}(n)\geq0$ for all $n\in\mathbb{N}\cup\{0\}$ by the hypothesis, Hardy's inequality \cite[p.~48]{D} and Fatou's lemma yield
	\begin{equation*}
	\begin{split}
	\|T_g\|_{\BMOA\rightarrow H^\infty}
	&\gtrsim\sup_{z\in\D}\sum_{k=0}^{\infty}\left|\sum_{n=0}^\infty\frac{(n+1)\widehat{g}(n+1)\overline{z}^{n+k+1}}{(n+k+1)(k+1)}\right|\\
	&\ge\sup_{0\le r<1}\sum_{k=0}^{\infty}\sum_{n=0}^\infty\frac{(n+1)\widehat{g}(n+1)r^{n+k+1}}{(n+k+1)(k+1)}\\
	&=\sum_{n=0}^\infty(n+1)\widehat{g}(n+1)\sum_{k=0}^{\infty}\frac{1}{(k+1)(n+k+1)}\\
	&\gtrsim\sum_{n=0}^\infty\widehat{g}(n+1)\log(n+2).
	\end{split}
	\end{equation*}
Therefore, (iii) implies (iv), and
	\begin{equation}\label{Eq:2}
	\|T_g\|_{\BMOA\rightarrow H^\infty}\gtrsim\sum_{n=0}^\infty\widehat{g}(n+1)\log(n+2).
	\end{equation}

If (iv) is satisfied, then 
	\begin{equation*}
	\begin{split}
	\int_0^1M_\infty(t,g')\log\frac{e}{1-t}\,dt 
	&=\int_0^1 \left(\sum_{n=0}^\infty (n+1)\widehat{g}(n+1)t^n \right)\log\frac{e}{1-t}\,dt\\
	&=\sum_{n=0}^\infty (n+1)\widehat{g}(n+1) \int_0^1  t^{n}\log\frac{e}{1-t}\,dt\\
	&\asymp\sum_{n=0}^\infty\widehat{g}(n+1)\log(n+2).
	\end{split}
	\end{equation*}
Thus (v) holds and
	\begin{equation}\label{Eq:3}
	\sum_{n=0}^\infty\widehat{g}(n+1)\log(n+2)
	\asymp\int_0^1M_\infty(t,g')\log\frac{e}{1-t}dt.
	\end{equation}
 
Finally, assume (v). Then
	\begin{equation*}
	\begin{split}
	\|T_g(f)\|_{H^\infty}
	=\sup_{z\in\D}\left|\int_0^zf(\z)g'(\z)d\z\right|
	&\le\sup_{0\le r<1}\int_0^rM_\infty(s,f)M_\infty(s,g')\,ds\\
	&\le\|f\|_{H_{\log}^\infty}\int_0^1M_{\infty}(s,g')\log\frac{e}{1-s}\,ds,\quad f\in\H(\D),
	\end{split}
	\end{equation*}
and therefore 
	\begin{equation}\label{Eq:4}
	\|T_g\|_{H_{\log}^\infty\rightarrow H^\infty}\lesssim\int_0^1M_\infty(r,g')\log\frac{e}{1-r}dr.
	\end{equation}
To complete the proof of the theorem it is now enough to show that $T_g: H^\infty_{\log}\to H^\infty$ is compact, that is
 $\lim_{n\to \infty}\|T_g(f_n)\|_{H^\infty}=0$ for each sequence $\{ f_n\}$ of analytic functions in $\D$ such that $\sup_n\|f_n\|_{H^\infty_{\log}}<\infty$
and $f_n\to0$ uniformly on compact subsets of $\D$.
Let $\e>0$. Fix $R=R(\e)\in(0,1)$ such that
	$$
	\int_R^1M_{\infty}(r,g')\log\frac{e}{1-r}dr<\e,
	$$
and then pick up $N=N(\e,R)\in\N$ such that $|f_n(z)|\le\e$ for all $ z \in\overline{D(0,R)}$ and $n\ge N$. Then, if $n\ge N$, we have
	\begin{equation*}
	\begin{split}
	\|T_g(f_n)\|_{H^\infty}
	&=\limsup_{|z|\to1^-}\left|\int_0^zf_n(\z)g'(\z)d\z\right|
	\le\limsup_{r\to1^-}\int_0^rM_\infty(s,f_n)M_\infty(s,g')\,ds\\
	&=\int_0^RM_\infty(s,f_n)M_\infty(s,g')\,ds
	+\int_R^1M_\infty(s,f_n)M_\infty(s,g')\,ds\\
	&\le\e\int_0^RM_\infty(s,g')\,ds
	+\|f_n\|_{H_{\log}^\infty}\int_R^1M_{\infty}(s,g')\log\frac{e}{1-s}\,ds
	\lesssim\e.
	\end{split}
	\end{equation*}
Thus $T_g:~H^\infty_{\log}\mapsto H^\infty$ is compact. The last thing to do is to observe that \eqref{Eq:1}--\eqref{Eq:4} imply the norm estimates \eqref{Eq:norm-estimates}. This finishes the proof.
\hfill$\Box$

\medskip

We finish the section and the paper by discussing briefly other natural choices for the space $X$ in Theorem~\ref{Bounded}. For a  nonnegative function  $\om\in L^1([0,1))$, the extension to $\D$, defined by $\om(z)=\om(|z|)$ for all $z\in\D$, is called a radial weight. For $0<p<\infty$ and such an $\omega$, the Lebesgue space $L^p_\om$ consists of complex-valued measurable functions $f$ on $\D$ such that
    $$
    \|f\|_{L^p_\omega}^p=\int_\D|f(z)|^p\omega(z)\,dA(z)<\infty.
    $$
The corresponding weighted Bergman space is $A^p_\om=L^p_\omega\cap\H(\D)$. For a radial weight $\om$, its associated weight $\om^\star$ is defined by $\om^\star(z)=\int_{|z|}^1 s \log\frac{s}{|z|}\om(s)\,ds$ for all $z\in\D\setminus\{0\}$. It arises naturally when the Hardy-Stein-Spencer formula is applied to the dilatation $f_r$ in order to establish the identity
	$$
	\|f\|_{A^p_\om}^p=\|\Delta|f|^p\|_{L^1_{\om^\star}}+\om(\D)|f(0)|^p,\quad f\in\H(\D),
	$$
see~\cite[Theorem~4.2]{PR} for details. Because the Laplacian of $|f|^p$ contains the factor $|f'|^2$, which can be interpreted as the Jacobian of the non-univalent change of variable $w=f(z)$, this equivalent norm is useful, for example, in the study of composition operators~\cite{PelRatSc}. But the associated weight comes to the picture also in some other instances which are more closely related to the topic of the present paper. To explain this, we say that a radial weight $\om$ belongs to the class~$\DD$ if there exists a constant $C=C(\om)\ge1$ such that $\widehat{\om}(r)\le C\widehat{\om}(\frac{1+r}{2})$ for all $0\le r<1$. Moreover, if there exist $K=K(\om)>1$ and $C=C(\om)>1$ such that $\widehat{\om}(r)\ge C\widehat{\om}\left(1-\frac{1-r}{K}\right)$ for all $0\le r<1$, then we write $\om\in\Dd$. The intersection $\DD\cap\Dd$ is denoted by $\DDD$. For $\om\in\DD$, the space $C^1(\om^\star)$ consists of $f\in\H(\D)$ such that the measure $|f'|^2\om^\star\,dA$ is a $1$-Carleson measure for $A^p_\om$~\cite[Theorem 6.1]{PelSum14}. As usual, we say that a positive Borel measure $\mu$ on $\D$ is a $p$-Carleson measure for $X$ if $X$ is continuously embedded into~$L^p_\mu$. The space $C^1(\om^\star)$ arises in the study of integration operators acting on weighted Bergman spaces. Indeed, it is known that, for each $0<p<\infty$, the operator $T_g$ is bounded from $A^p_\om$ into itself if and only if $g \in C^1(\om^\star)$~\cite[Theorem~6.4]{PelSum14}. For $\om\in\DDD$, the space $C^1(\om^\star)$ is nothing else but the Bloch space by the proof of \cite[Theorem~6.1(C)]{PelSum14}, but it may be a proper subspace of $\B$ by \cite[Theorem 6.1(D)]{PelSum14}, yet it always contains $\BMOA$. Therefore we may choose $X=C^1(\om^\star)$ in Theorem~\ref{Bounded}. It is worth observing that while $\BMOA$ and $\B$ are conformally invariant, there exists $\om\in\DD\setminus\DDD$ such that $C^1(\om^\star)$ is not \cite[Proposition~5.6]{PR}. Recall that a Banach space $X\subset\H(\D)$, equipped with a semi-norm $\rho_X$, is conformally invariant if there exists a constant $C=C(X)>0$ such that $\rho_X(f\circ\varphi) \le C \rho (f)_X$ for all $f\in X$ and for all automorphisms $\varphi$ of $\D$.

\end{document}